\title{Parametrized Abel--Jacobi maps\\ and abelian cycles in the Torelli group}
\author{Thomas Church and Benson Farb
\thanks{The second author gratefully acknowledges support from the National
Science Foundation.}}
\theoremstyle{plain}
\newtheorem{theorem}{Theorem}[section]
\newtheorem{proposition}[theorem]{Proposition}
\newtheorem{conjecture}[theorem]{Conjecture}
\newtheorem*{theorem:pc}{Theorem \ref{theorem:principal congruence}}
\newtheorem*{theorem:jk}{Theorem \ref{theorem:johnson}}
\newtheorem*{theorem:br}{Theorem \ref{theorem:brunnian}}
\theoremstyle{definition}
\newtheorem{corollary}[theorem]{Corollary}
\newtheorem{definition}[theorem]{Definition}
\newcommand{\nc}{\newcommand}
\nc{\dmo}{\DeclareMathOperator}
\nc{\I}{\mathcal{I}}
\nc{\K}{\mathcal{K}}
\nc{\N}{\mathcal{N}}
\nc{\U}{\mathcal{U}}
\nc{\Q}{\mathbb{Q}}
\nc{\R}{\mathbb{R}}
\nc{\Z}{\mathbb{Z}}
\nc{\C}{\mathbb{C}}
\dmo{\GL}{GL}
\dmo{\PSL}{PSL}
\dmo{\Teich}{Teich}
\dmo{\spec}{spec}
\nc{\gin}{i}
\nc{\ga}{\Gamma}
\dmo{\Out}{Out}
\dmo{\brun}{Brun}
\dmo\im{im}
\dmo\id{id}
\dmo\Sp{Sp}
\dmo\Mod{Mod}
\dmo\PMod{PMod}
\dmo\genus{genus}
\dmo\Jac{Jac}
\dmo\fd{fd}
\dmo\Tor{{\cal T}}
\nc{\bwedge}{\textstyle{\bigwedge}}
\nc{\figwedge}{1}
\nc{\figtau}{2}
\nc{\figpants}{3}
\nc{\figexamples}{4}
\nc{\figshriek}{5}
\nc{\figabeliansurj}{6}
\nc{\figshrieksurj}{7}
\nc{\figring}{8}
\nc{\figdetect}{9}
\def\Ebar{\overline{E}}
\renewcommand{\epsilon}{\varepsilon}
\def\tauJ{\tau_{J}}
\nc{\coloneq}{\mathrel{\mathop:}\mkern-1.2mu=}
\nc{\margin}[1]{\marginpar{\scriptsize #1}}
\nc{\para}[1]{\medskip\noindent\textbf{#1.}}
\begin{document}
\maketitle
\begin{abstract}
  Let $\I_{g,\ast}$ denote the {\em Torelli group} of the genus
  $g\geq 2$ surface $S_g$ with one marked point.    This
  is the group of homotopy classes (rel basepoint) of homeomorphisms of $S_g$ fixing the basepoint and acting trivially on  $H\coloneq H_1(S_g,\Q)$.  In 1983 Johnson constructed a beautiful
  family of invariants
  \[\tau_i\colon H_i(\I_{g,\ast},\Q)\to \bwedge^{i+2}H\] 
  for $0\leq i\leq 2g-2$, using a kind of Abel--Jacobi map for
  families.  He used these invariants to detect nontrivial cycles in $\I_{g,\ast}$.
  Johnson proved that $\tau_1$ is an isomorphism, and asked
  if the same is true for $\tau_i$ with $i>1$.
  
  The goal of this paper is to introduce various methods for computing
  $\tau_i$; in particular we prove that $\tau_i$ is not injective 
  for any $2\leq i<g$, answering Johnson's question in the negative.  We also show that $\tau_2$ is
  surjective.  For $g\geq 3$, we find many classes in the image of
  $\tau_i$ and use them to deduce that $H_i(\I_{g,\ast},\Q)\neq 0$ for each $1\leq
  i< g$.  This is in contrast with the case of mapping class groups.  Many of our classes are
  stable, so we can deduce that $H_i(\I_{\infty,1},\Q)$ is
  infinite-dimensional for each $i\geq 1$.  Finally, we conjecture a
  new kind of ``representation-theoretic stability'' for the homology
  of the Torelli group, for which our results provide evidence.
\end{abstract}

\maketitle

\section{Introduction}
Let $S_g$ be a connected, closed, oriented surface of genus $g\geq 2$,
let $H\coloneq H_1(S_g,\Q)$, and let $H_\Z\coloneq H_1(S_g,\Z)$.  The
\emph{(pointed) Torelli group} $\I_{g,\ast}$ is the group of pointed
homotopy classes of pointed homeomorphisms of $S_g$ acting trivially on $H_\Z$.
Understanding $\I_{g,\ast}$, and particularly its (co)homology, is an
important problem in topology and algebraic geometry (see, e.g.,
\cite{Jo}, \cite{Ha2} and \cite{Mo1} for discussions).

\para{Parametrized Abel--Jacobi maps}
In \cite{Jo}, Johnson produced a beautiful family of invariants, and used them to 
prove that certain cycles in $H_{\ast}(\I_{g,\ast},\Q)$ are nontrivial.  He did this by
constructing, for each $0\leq i\leq 2g-2$, an
$\Sp(2g,\Z)$--equivariant homomorphism \[\tau_i\colon
H_i(\I_{g,\ast},\Q)\to \bwedge^{i+2}H.\] The maps $\tau_i$ can be
described using a kind of parametrized Abel--Jacobi map, as follows. We
first outline an explicit construction of $\tau_1\colon
H_1(\I_{g,\ast},\Q)\to \bwedge^3 H$. Let $S_{g,\ast}$ denote $S_g$
with a marked point. For any $f\in \I_{g,\ast}$, the image
$\tau_1([f])\in \bwedge^3 H$ can be computed as follows. Construct the
mapping torus \[M_f\coloneq S_{g,\ast}\times [0,1]/(f(p),0)\sim
(p,1).\] This 3--manifold is naturally a bundle with section $S_{g,\ast}\to M_f\to
S^1$, and the fact that $f$ acts trivially on $H_1(S_{g,\ast})$
implies that $H_1(M_f)$ is canonically isomorphic to
$H_1(S_{g,\ast})\times H_1(S^1)$.

The Jacobian of a Riemann surface $S_g$ is the complex torus
$\Jac(S_g)\approx T^{2g}$\linebreak given by $(H^1(S_g,\C))^*/H_1(S_g,\Z)$.  The
Abel--Jacobi map is a holomorphic map\linebreak $j\colon (S_g,\ast) \to
(\Jac(S_g),0)$ unique in its homotopy class, with the property that
the induced map on fundamental group $j_*\colon \pi_1(S_g,\ast)\to
\pi_1(\Jac(S_{g,\ast}),0)\approx H_\Z$ is the abelianization. The
composition
\[\pi_1(M_f)\to H_1(M_f)\approx H_1(S_{g,\ast})\times H_1(S^1)\to H_1(S_{g,\ast})\]
induces a map
\begin{equation}\label{eq:J}
J\colon M_f\to \Jac(S_g)
\end{equation}
 unique up to homotopy. Now $\tau_1([f])$ is defined as the image of the fundamental class
$[M_f]\in H_3(M_f,\Q)$ under the induced map $J_*\colon H_3(M_f,\Q)\to
H_3(\Jac(S_g),\Q)\approx \bwedge^3 H$.

For the definition of $\tau_i$ for all $i\geq 1$, let $\Tor_{g,\ast}$ denote the
\emph{Torelli space} of Riemann surfaces diffeomorphic to $S_g$
endowed with a homology marking and a marked point; that is,
$\Tor_{g,\ast}$ is the quotient of the Teichm\"{u}ller space
$\Teich_{g,\ast}$ by the (free) action of $\I_{g,\ast}$.  As
$\Teich_{g,\ast}$ is contractible, we have that
$H_i(\I_{g,\ast})\approx H_i(\Tor_{g,\ast})$.  Let
\begin{equation}
  \label{eq:univbundle}
  S_{g,\ast}\to \Tor^\ast_{g,\ast}\overset{\pi}{\to}\Tor_{g,\ast}
\end{equation}
denote the universal $S_{g,\ast}$--bundle over $\Tor_{g,\ast}$.  There is also a
universal bundle
\begin{equation}
  \label{eq:jacbundle}
  \Jac(S_{g}) 
  \to \Tor^{\Jac}_{g,\ast}
  \to \Tor_{g,\ast}
\end{equation} 
of Jacobians, and the Abel--Jacobi map $j$ globalizes to give a (holomorphic) map
\begin{equation}
\label{eq:globalAJ}
J\colon \Tor^*_{g,\ast} \to\Tor^{\Jac}_{g,\ast}.
\end{equation}
 Note that the
zero element in each fiber gives a section of the bundle
(\ref{eq:jacbundle}), and that $\I_{g,*}$ acts trivially on the fiber
$\Jac(S_{g})$. These are the only obstructions to triviality for a
torus bundle, so the bundle $\Tor^{\Jac}_{g,\ast} \to \Tor_{g,\ast}$
is topologically trivial.  Let $p\colon \Tor^{\Jac}_{g,\ast}\to
\Jac(S_{g})$ denote projection to the torus factor.

We can now define $\tau_i(\sigma)$ for an $i$--cycle $\sigma$ in
$\Tor_{g,\ast}$, where $i\leq 2g-2$.  The inverse image $\pi^{-1}(\sigma)$ in
$\Tor^\ast_{g,\ast}$ gives an $(i+2)$--cycle (this operation is
sometimes called the \emph{Gysin homomorphism}), and we can take the
image of this $(i+2)$--cycle under the composition $p\circ J$, giving
us an element $\tau_i(\sigma)\in H_{i+2}(\Jac(S_{g}),\Q)\approx
\bwedge^{i+2}H$.

The images of the maps $\tau_0$ and $\tau_1$ are worked out explicitly in Section
\ref{section:examples} below.  In particular, as proved by Hain in
\cite{Ha1} (see also Proposition \ref{prop:tau1} below), the map
$\tau_1$ agrees with the original, purely algebraic definition of the
\emph{Johnson homomorphism}, which plays a central role in the study
of $\I_{g,\ast}$.  In a series of papers, Johnson proved that $\tau_1$
is, modulo torsion, an isomorphism (see \cite{Jo} for a summary of
this work).  In his 1983 paper \cite{Jo}, Johnson constructed $\tau_i$
as above, and as Question C he asked if $\tau_i$ is a rational
isomorphism for all $i\geq 1$.  In \cite{Ha1} Hain used continuous
cohomology and representation theory to prove that $\tau_2$ is not
injective; it seems that Hain's method cannot be extended to the case
when $i\geq 3$.  In this paper we develop a method for concretely
computing the values of the $\tau_i$.  Our first main result answers
Johnson's question negatively in degrees $2\leq i<g$.

\begin{theorem}\label{theorem:noninjective}
The map $\tau_i$ is not injective for any $2\leq i< g$.
\end{theorem}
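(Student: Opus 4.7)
The plan is to produce, for each $2\leq i<g$, an abelian cycle in $H_i(\I_{g,\ast},\Q)$ on which $\tau_i$ vanishes, and then to verify that it is nonzero by a separate argument.

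The starting point is the \emph{abelian cycle} construction: for pairwise commuting elements $f_1,\dots,f_i\in \I_{g,\ast}$, the induced map $\Z^i\to \I_{g,\ast}$ sends the fundamental class of the torus $T^i$ to a class $A(f_1,\dots,f_i)\in H_i(\I_{g,\ast},\Q)$. A convenient supply of such commuting elements comes from bounding pair maps $T_{a_j}T_{b_j}^{-1}$ where the $(a_j,b_j)$ cobound pairwise disjoint subsurfaces $\Sigma_j\subset S_g$. Arranging $i$ such disjoint subsurfaces while leaving enough genus in the complement to carry nontrivial $\tau_i$--data is where the hypothesis $i<g$ enters.

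The first real step is to compute $\tau_i(A)$ on such cycles. From the classical formula $\tau_1(T_aT_b^{-1})=[a]\wedge\omega_\Sigma\in\bwedge^3 H$ for the bounding pair cobounding a subsurface $\Sigma$ (where $\omega_\Sigma\in\bwedge^2 H$ is the symplectic form of $\Sigma$), together with the definition of $\tau_i$ via the parametrized Abel--Jacobi map over a torus base, one should derive a multiplicative formula for $\tau_i(A)$. The expected shape is that $\tau_i(A)$ lies in a highly constrained, decomposable piece of $\bwedge^{i+2}H$ involving the classes $[a_1],\dots,[a_i]$ together with a single extra $2$-form built from the $\omega_{\Sigma_j}$. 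Any linear relation among such decomposable pieces then yields a linear combination $\sum_k c_k A^{(k)}$ of abelian cycles on which $\tau_i$ vanishes identically.

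The main obstacle is to show that such a combination does not vanish in $H_i(\I_{g,\ast},\Q)$. The natural approach is to produce a secondary invariant distinguishing abelian cycles with the same $\tau_i$--value --- for instance, by pushing the cycle forward to homology of a suitable subquotient of $\I_{g,\ast}$ on which the cancellation breaks, or by exploiting the principal congruence quotient of $\Sp(2g,\Z)$ (cf.\ Theorem~\ref{theorem:principal congruence}). Alternatively, one might argue directly that abelian cycles coming from disjoint subsurfaces are linearly independent in $H_i$ beyond the relations visible to $\tau_i$, perhaps by constructing a retraction onto a simpler mapping-class-group quotient. Confirming this nontriviality is where the bulk of the technical work will lie.
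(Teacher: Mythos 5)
Your overall architecture matches the paper's: build an abelian cycle from commuting bounding pair maps, arrange that $\tau_i$ kills it, and then detect its nontriviality by other means. But there is a genuine gap, and it sits exactly at the step you yourself flag as ``where the bulk of the technical work will lie'': you never actually produce the secondary invariant that certifies your class is nonzero in $H_i(\I_{g,\ast},\Q)$. The candidates you list (a congruence quotient, a retraction onto a simpler quotient, an unspecified linear-independence argument) are all left as possibilities, and none of them is developed to the point where one could check it works. Without that, you have only exhibited a class in $\ker\tau_i$ that might be zero. The paper's solution is concrete and worth internalizing: apply the degree-one Johnson homomorphism $\tauJ\colon\I_{g,\ast}\to\bwedge^3 H_\Z$ functorially in \emph{higher} homology, giving $(\tauJ)_*\colon H_i(\I_{g,\ast})\to H_i(\bwedge^3 H_\Z)\approx\bwedge^i(\bwedge^3H)$. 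On an abelian cycle $\{f_1,\dots,f_i\}$ this evaluates to $\tauJ(f_1)\wedge\cdots\wedge\tauJ(f_i)$, which is nonzero as soon as the elements $\tauJ(f_k)\in\bwedge^3H$ are linearly independent --- a condition one can arrange by hand. This is the missing idea.

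A secondary point: your mechanism for landing in $\ker\tau_i$ is more complicated than necessary, and as stated it is slightly off. For bounding pairs cobounding pairwise \emph{disjoint} (non-nested) subsurfaces, the paper's Theorem~\ref{thm:non-nested} shows that $\tau_i$ of the abelian cycle is already zero --- there is no nonzero decomposable value $\omega\wedge[a_1]\wedge\cdots\wedge[a_i]$ to cancel against another, so no linear combination is needed. (The product formula $\tau_i=\omega_0\wedge c_1\wedge\cdots\wedge c_i$ you anticipate holds only for \emph{truly nested} configurations, Theorem~\ref{thm:nested}.) The paper instead takes a single abelian cycle built from a ring of pairwise homologous curves $\gamma_1,\dots,\gamma_g$ with $f_k=T_{\gamma_{k+1}}T_{\gamma_k}^{-1}$; vanishing of $\tau_i$ is immediate from the non-nestedness criterion, while $(\tauJ)_*$ evaluates to $(a_1\wedge b_1\wedge a_g)\wedge\cdots\wedge(a_i\wedge b_i\wedge a_g)\neq 0$. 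If you repair your proposal by (i) using a single non-nested configuration rather than a linear combination, and (ii) detecting it with $(\tauJ)_*$ on $H_i$, you recover the paper's argument. Finally, note that the reference to Theorem~\ref{theorem:principal congruence} points to nothing in this paper and should be removed.
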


\para{Remark}  The map $\tau_i$ can be defined on integral homology, with target $\bwedge^{i+2}H_\Z$.  Since the target is free abelian, and since the elements we construct in the 
kernel of $\tau_i$ are integral classes, Theorem~\ref{theorem:noninjective} implies that our classes also lie in the kernel of this integral version of $\tau_i$.

\bigskip
We will find a number of sources for nontrivial cycles in
$\ker \tau_i$.  One source will be certain ``abelian cycles'' coming from
bounding pair maps (see below).  These cycles are determined by
certain collections of simple closed curves.  The (non)vanishing of
$\tau_i$ on such cycles will depend on the topological configuration
of the collection of curves, namely whether or not they are ``truly
nested'' (see Definition~\ref{def:trulynested}).  The nontriviality of
cycles in the kernel of $\tau_i$ is detected by combining certain
operations in the homology of Torelli groups with other $\tau_j$ for
$j\neq i$.  We remark that Bestvina--Bux--Margalit \cite{BBM} found
nontrivial elements of $H_{3g-3}(\I_{g,\ast},\Q)$; there is no $\tau_i$ defined in this dimension 
since $3g-3>2g-2$.

In the positive direction of Johnson's question, we show that the
$\tau_i$ detect nontrivial classes in each dimension; in particular we
prove that $\tau_2$ is surjective.  Our general theorem in this
direction is most simply stated in the language of symplectic
representation theory.  From the standard exact sequence
\[1\to\I_{g,\ast}\to\Mod_{g,\ast}\to\Sp(2g,\Z)\to 1,\] the conjugation
action of $\Mod_{g,\ast}$ on $\I_{g,\ast}$ descends to an action of
$\Sp(2g,\Z)$ by outer automorphisms, which gives $H_i(\I_{g,1},\Q)$
the structure of an $\Sp(2g,\Z)$--module.  The construction of the homomorphism
$\tau_i$ shows that it is $\Sp(2g,\Z)$--equivariant.

\para{Irreducibility remark}  Let $V$ be an irreducible
$\Sp(2g,\Q)$--representation.  It follows from Proposition 3.2 of
\cite{Bo} that $V$ is an irreducible $\Sp(2g,\Z)$--module (this is
close to the statement of the Borel Density Theorem in this case).
Henceforth we will not make the distinction of irreducibility over
$\Q$ versus irreducibility over $\Z$.  \bigskip

The algebraic irreducible representations of $\Sp(2g,\Q)$ are classified by
their highest weight vectors (a good reference is \cite{FH}). Choose a set
$\lambda_1,\ldots,\lambda_g$ of fundamental weights for
$\Sp(2g,\Q)$. A highest weight vector is a linear combination
$\lambda=\sum c_i\lambda_i$, where the coefficients are nonnegative
integers. We denote the irreducible representation of $\Sp(2g,\Q)$
with highest weight vector $\lambda$ by $V(\lambda)$. For example,
$V(\lambda_i)$ is the kernel of the contraction
$C_i:\bwedge^iH\to\bwedge^{i-2}H$ defined by:
\begin{equation}
\label{eq:contraction}
C_i(x_1\wedge\cdots\wedge x_i)=\sum_{j<k}(-1)^{j+k+1}\omega(x_j,x_k) x_1\wedge \cdots\wedge \widehat{x}_j\wedge\cdots\wedge \widehat{x}_k\wedge \cdots \wedge x_i
\end{equation}
 The $\Sp(2g,\Q)$--module $\bwedge^k
H$ for $k\leq g$ decomposes into irreducible representations as
\[\bwedge^k H\approx V(\lambda_k)\oplus
V(\lambda_{k-2})\oplus \cdots\oplus V(\lambda_\epsilon)\] where
$\epsilon=0$ or $1$ depending on whether $k$ is even or odd.  Our
second main result is the following.

\begin{theorem}
  \label{theorem:surjective}
  Suppose $g\geq 2$. Then for $1\leq i\leq g-2$, we have
  \begin{equation}\label{eq:repsall}\tau_i(H_i(\I_{g,\ast},\Q))\supseteq
    V(\lambda_{i+2}) \oplus V(\lambda_{i}).
  \end{equation}
  In addition, for $1\leq i\leq g$ and $i$ even, we
  have \begin{equation}\label{eq:repseven}\tau_i(H_i(\I_{g,\ast},\Q))
    \supseteq V(\lambda_{i-2}).
  \end{equation}
\end{theorem}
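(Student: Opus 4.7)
Because $\tau_i$ is $\Sp(2g,\Z)$--equivariant and each $V(\lambda_k)$ is $\Sp(2g,\Z)$--irreducible (by the Irreducibility Remark), to prove containment of a summand $V(\lambda_k)$ in $\tau_i(H_i(\I_{g,\ast},\Q))$ it suffices to exhibit a single cycle in $H_i(\I_{g,\ast},\Q)$ whose image under $\tau_i$ has nonzero component in $V(\lambda_k)$. I will hit each of the three specified summands separately by explicit abelian cycles.

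My cycles come from bounding pair maps. A bounding pair $(\alpha,\beta)$ is a pair of disjoint, non-isotopic, homologous simple closed curves on $S_g$, and the associated bounding pair map $T_\alpha T_\beta^{-1}$ lies in $\I_{g,\ast}$. For $i$ pairwise disjoint bounding pairs, the corresponding maps commute and generate an abelian subgroup $\Z^i\subset\I_{g,\ast}$, yielding an abelian cycle in $H_i(\I_{g,\ast},\Q)$. The main computational tool I need is a product formula that expresses $\tau_i$ of such an abelian cycle in $\bwedge^{i+2}H$ in terms of the Johnson data $\tau_1(T_{\alpha_j}T_{\beta_j}^{-1})=[\alpha_j]\wedge\omega_{\Sigma_j}$ of the individual bounding pair maps (Proposition~\ref{prop:tau1}, with $\omega_{\Sigma_j}\in\bwedge^2 H$ the symplectic class of a complementary subsurface). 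I expect to derive this formula from the functoriality of the parametrized Abel--Jacobi construction under fiber products of $S_{g,\ast}$--bundles over tori in $\Tor_{g,\ast}$, combined with the K\"unneth decomposition of $H_\ast(\Jac(S_g),\Q)\approx \bwedge^\ast H$.

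To realize a highest weight vector of $V(\lambda_{i+2})\subset\bwedge^{i+2}H$, I take $i$ disjoint bounding pairs arranged so that the curve classes $[\alpha_j]$ and suitably chosen handles in the complementary subsurfaces span $i+2$ independent symplectic directions; the product formula should then yield a multiple of $a_1\wedge a_2\wedge\cdots\wedge a_{i+2}$, which is annihilated by $C_{i+2}$ and therefore projects nontrivially onto $V(\lambda_{i+2})$. For $V(\lambda_i)$, I nest two of the bounding pairs so that their complementary subsurfaces overlap in a common handle; the product formula now produces a wedge containing the symplectic form $\omega=\sum_j a_j\wedge b_j$ as an explicit factor, giving a highest weight vector of the form $\omega\wedge a_1\wedge\cdots\wedge a_i$ in $V(\lambda_i)\subset\bwedge^{i+2}H$. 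For $V(\lambda_{i-2})$ with $i$ even, I iterate this nesting at a second site to introduce two factors of $\omega$, producing a class of the form $\omega\wedge\omega\wedge a_1\wedge\cdots\wedge a_{i-2}$; the evenness of $i$ is exactly what prevents this doubled--$\omega$ class from vanishing under the antisymmetry in the wedge. The hypotheses $i\leq g-2$ and $i\leq g$ are precisely the genus counts needed to fit these configurations disjointly into $S_g$.

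The main obstacle will be establishing the product formula in a usable form, and then, in the two nested cases, verifying that the projection of $\tau_i$ onto the intended irreducible summand is genuinely nonzero rather than being cancelled by cross terms coming from other pairings of curve classes with complementary $\omega_{\Sigma_j}$'s. Concretely this reduces to applying the contractions $C_k$ to the element produced by the formula and checking the correct vanishing pattern: annihilation by $C_{i+2}$ in the first case, by $C_i$ but not $C_{i+2}$ in the second, and by $C_{i-2}$ but not $C_i$ in the third. Each of these verifications amounts to a combinatorial identity about the symplectic form $\omega$, which I expect to be straightforward once the configurations are set up so that only the ``expected'' pairings contribute.
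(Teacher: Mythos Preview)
Your plan for the first two summands $V(\lambda_{i+2})\oplus V(\lambda_i)$ is close in spirit to the paper's, but your expected ``product formula'' is not what actually holds. There is no formula expressing $\tau_i$ of an abelian cycle in terms of the individual Johnson images $\tau_1(f_j)\in\bwedge^3 H$; dimensionally that cannot land in $\bwedge^{i+2}H$. What one proves instead is the recursion $\tau_{i+1}(\sigma\times f)=\tau_i(\sigma)\wedge[\alpha]$ (Proposition~\ref{prop:crossTstable}), which for a \emph{truly nested} collection yields $\tau_i(\{f_1,\ldots,f_i\})=\omega_0\wedge c_1\wedge\cdots\wedge c_i$, where $\omega_0$ is the symplectic form of the single ``farthest'' complementary subsurface (Theorem~\ref{thm:nested}). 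In particular you never get the pure wedge $a_1\wedge\cdots\wedge a_{i+2}$ directly; the paper obtains it as the difference of two $\Sp$--translates of the element $a_1\wedge b_1\wedge a_3\wedge\cdots\wedge a_{i+2}$, and that single element already projects nontrivially onto both $V(\lambda_{i+2})$ and $V(\lambda_i)$. So your two separate constructions for these summands can be replaced by one.

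The genuine gap is your treatment of $V(\lambda_{i-2})$. Your plan is to ``nest at a second site'' to force a second $\omega$ factor into $\tau_i$ of an abelian cycle of bounding pair maps. This cannot succeed: the computation above shows that a truly nested collection contributes \emph{exactly one} $\omega_0$ factor, while any collection that is not truly nested has $\tau_i=0$ (Theorem~\ref{thm:non-nested}, via the pair-of-pants vanishing of Proposition~\ref{prop:pantsproducttrivial}). So no abelian cycle built from bounding pair maps will give you $\omega\wedge\omega\wedge a_1\wedge\cdots\wedge a_{i-2}$. The paper produces the double--$\omega$ class by a different mechanism entirely: it applies the Gysin map $\pi^!\colon H_{i-2}(\I_{g,\ast})\to H_i(\I_{g,\ast})$ for the universal surface bundle to a truly nested abelian cycle in degree $i-2$, and computes (Theorem~\ref{thm:evenshriek}) that $\tau_i(\pi^!\sigma)=2\,\omega_0\wedge\omega^0\wedge c_1\wedge\cdots\wedge c_{i-2}$, picking up symplectic forms from \emph{both} the farthest and the closest complementary subsurfaces. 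Your diagnosis of the evenness hypothesis is also off: $\omega\wedge\omega$ is nonzero in $\bwedge^4 H$ regardless of parity; the parity constraint enters because the fiber--swapping involution on the bundle forces $\tau_{k+2}\circ\pi^!=0$ when $k$ is odd (Theorem~\ref{thm:oddshriek}).
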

\noindent For $i=g-1$, the term $V(\lambda_{i+2})$ in
\eqref{eq:repsall} is not meaningful, but the proof of
Theorem~\ref{theorem:surjective} will show that
$\tau_{g-1}(H_{g-1}(\I_{g,\ast},\Q))$ contains $V(\lambda_{g-1})$.
 
\bigskip Since $\bwedge^4H=V(\lambda_4)\oplus V(\lambda_2)\oplus
V(\lambda_0)$, we have the following.

\begin{corollary}\label{cor:tau2}
  Let $g\geq 2$.  Then $\tau_2$ is surjective.
\end{corollary}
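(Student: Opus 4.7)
The plan is to read off the irreducible $\Sp(2g,\Q)$--decomposition of the target $\bwedge^4 H$ and cover each summand using Theorem \ref{theorem:surjective} together with its addendum for $i=g-1$. Because $\im\tau_2$ is an $\Sp(2g,\Z)$--subrepresentation, and because $\Sp(2g,\Z)$--irreducibility coincides with $\Sp(2g,\Q)$--irreducibility by the remark preceding Theorem \ref{theorem:surjective}, it suffices to exhibit each irreducible summand of $\bwedge^4 H$ inside $\im\tau_2$. The work is purely a matter of matching pieces; all genuine content has already been absorbed into Theorem \ref{theorem:surjective}.

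I would split the verification according to $g$. For $g\geq 4$, the decomposition formula recalled in the introduction applies directly (since $k=4\leq g$) and yields $\bwedge^4 H\approx V(\lambda_4)\oplus V(\lambda_2)\oplus V(\lambda_0)$. The inclusion \eqref{eq:repsall} of Theorem \ref{theorem:surjective}, applied with $i=2\leq g-2$, places $V(\lambda_4)\oplus V(\lambda_2)$ in $\im\tau_2$; and the even-case inclusion \eqref{eq:repseven}, applied with $i=2\leq g$, places $V(\lambda_0)$ there as well. For $g=3$, the weight $\lambda_4$ is not meaningful, and a dimension count (or the symplectic Lefschetz-type decomposition obtained by wedging with the symplectic form) yields $\bwedge^4 H\approx V(\lambda_2)\oplus V(\lambda_0)$; here the addendum to Theorem \ref{theorem:surjective} for $i=g-1=2$ supplies $V(\lambda_2)$, while \eqref{eq:repseven} supplies $V(\lambda_0)$. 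For $g=2$ the target $\bwedge^4 H$ is one-dimensional, equal to $V(\lambda_0)$, which is contained in $\im\tau_2$ by \eqref{eq:repseven} alone.

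The only mild obstacle is handling the low-genus cases $g=2,3$, in which the explicit formula from the introduction for $\bwedge^k H$ requires $k\leq g$; but the corresponding decomposition when $k>g$ is a standard fact of symplectic representation theory and can be verified directly from the dimensions of the $V(\lambda_j)$. All substantive work is upstream in Theorem \ref{theorem:surjective}.
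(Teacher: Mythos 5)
Your proposal is correct and follows exactly the paper's route: decompose $\bwedge^4 H$ into irreducibles and cover each summand using Theorem~\ref{theorem:surjective} (with its $i=g-1$ addendum), using $\Sp$--equivariance of $\tau_2$. You are in fact slightly more careful than the paper, which states the decomposition $\bwedge^4H=V(\lambda_4)\oplus V(\lambda_2)\oplus V(\lambda_0)$ without commenting on the degenerate cases $g=2,3$ that you handle explicitly.
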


We wish to point out that Morita announced in \cite{Mo2} that a map closely 
related to $\tau_2$ is surjective.  As another corollary of Theorem
\ref{theorem:surjective} we deduce the following.

\begin{corollary}
  Let $g\geq 2$.  Then $H_i(\I_{g,*},\Q)$ is nonzero for each $1\leq
  i<g$. When $g$ is even, $H_g(\I_{g,*},\Q)$ is also nonzero.
\end{corollary}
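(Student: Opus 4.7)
The plan is to deduce this nonvanishing statement directly from Theorem~\ref{theorem:surjective}, whose conclusions express specific $\Sp(2g,\Q)$--representations as subrepresentations of $\tau_i(H_i(\I_{g,\ast},\Q))$. The guiding principle is trivial but decisive: if the image of $\tau_i$ contains a nonzero representation, then $H_i(\I_{g,\ast},\Q)$ itself must be nonzero. Thus the proof reduces to checking that, in each degree of interest, one of the summands guaranteed by Theorem~\ref{theorem:surjective} is a genuinely nonzero $\Sp(2g,\Q)$--module.

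First, for $1\leq i\leq g-2$, part \eqref{eq:repsall} guarantees that $V(\lambda_i)$ lies in $\tau_i(H_i(\I_{g,\ast},\Q))$. Since $\lambda_i$ is a fundamental weight of $\Sp(2g,\Q)$, the representation $V(\lambda_i)$ is nonzero --- concretely, it is the kernel of the contraction $C_i$ of \eqref{eq:contraction} inside $\bwedge^i H$, and this kernel is visibly nonzero. Hence $H_i(\I_{g,\ast},\Q)\neq 0$ throughout this range.

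Next, for $i=g-1$, I would invoke the remark immediately following Theorem~\ref{theorem:surjective}, which explicitly asserts that $V(\lambda_{g-1})$ lies in the image of $\tau_{g-1}$. This is again a nonzero irreducible, completing the range $1\leq i<g$.

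Finally, when $g$ is even I would apply \eqref{eq:repseven} with $i=g$ to obtain $V(\lambda_{g-2})\subseteq \tau_g(H_g(\I_{g,\ast},\Q))$. For $g\geq 4$ this is a nonzero fundamental representation; for $g=2$ the summand is $V(\lambda_0)=\Q$, which is nonzero as the trivial representation (and in any case Corollary~\ref{cor:tau2} already gives $\tau_2$ surjective). Either way $H_g(\I_{g,\ast},\Q)\neq 0$. There is essentially no obstacle beyond bookkeeping: the whole argument is a case-by-case unpacking of Theorem~\ref{theorem:surjective}, verifying in each case that the claimed summand of $\bwedge^{i+2}H$ does not degenerate.
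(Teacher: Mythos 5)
Your proposal is correct and follows exactly the route the paper intends: the corollary is an immediate consequence of Theorem~\ref{theorem:surjective} (together with the remark covering $i=g-1$), since a map with nonzero image has nonzero domain, and each of the representations $V(\lambda_i)$, $V(\lambda_{g-1})$, $V(\lambda_{g-2})$ (or $V(\lambda_0)=\Q$ when $g=2$) is indeed nonzero. Your case-by-case verification, including the edge cases at $i=g-1$ and $g=2$, is exactly the bookkeeping required.
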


Theorem~\ref{theorem:surjective} also provides evidence for a
``homological stability'' conjecture for the Torelli group, which we
now outline.

\para{Stable classes} The nontrivial classes we construct above
are stable.  In order to explain this we need to extend our picture to
surfaces with boundary.  Let $\I_{g,1}$ denote the group of homotopy
classes of homeomorphisms of the compact genus $g\geq 2$ surface
$S_{g,1}$ with one boundary component, acting trivially on
$H_1(S_{g,1},\Z)$.  Here both the homeomorphisms and homotopies are taken
to be the identity on $\partial S_{g,1}$.

The map $S_{g,1}\to S_{g,*}$ that identifies $\partial S_{g,1}$ to a
single (marked) point gives a homomorphism $\nu\colon \I_{g,1}\to \I_{g,\ast}$
whose kernel is the cyclic group generated by the Dehn twist about
$\partial S_{g,1}$.  We define a homomorphism
\[\widehat{\tau_i}\colon H_i(\I_{g,1},\Q)\to \bwedge^{i+2}H\]
by composing $\tau_i$ with the map on homology induced by $\nu$. From the proof of
Theorem~\ref{theorem:surjective}, we immediately obtain, for $1\leq
i\leq g-2$, that \[\widehat{\tau_i}(H_i(\I_{g,1},\Q))\supseteq
V(\lambda_{i+2})\oplus V(\lambda_i).\] Now, the natural inclusion
$S_{g,1}\hookrightarrow S_{g+1,1}$ induces a natural inclusion
$\I_{g,1}\hookrightarrow \I_{g+1,1}$.  We can thus form the direct 
limit
\[\I_{\infty,1}\coloneq \lim_{g\rightarrow\infty}\I_{g,1},\]
called the \emph{stable Torelli group}. It is easy to see from the
definitions that the following diagram is commutative, where
$H_g=H_1(S_{g,1},\Q)$ and $H_{g+1}=H_1(S_{g+1,1},\Q)$:
\[\xymatrix{
H_i(\I_{g,1},\Q)\ar[r]\ar_{\widehat{\tau_i}}[d]&H_i(\I_{g+1,1},\Q)\ar^{\widehat{\tau_i}}[d]\\
\bwedge^i H_g\ar[r]&\bwedge^i H_{g+1}
}\]
It follows that each nontrivial class in $H_i(\I_{g,1},\Q)$
constructed above is \emph{stable}, in that its image in
$H_i(\I_{g+k,1},\Q)$ is nontrivial for each $k\geq 0$.  As homology
preserves direct limits, and since $\dim V(\lambda_i)\to \infty$ as
$g\to \infty$, we have the following corollary.

\begin{corollary}
\label{corollary:inftor}
  For each $i\geq 1$, the vector space $H_i(\I_{\infty,1},\Q)$ is
  infinite-dimensional.
\end{corollary}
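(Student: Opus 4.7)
The plan is to assemble pieces already essentially set up in the paragraph preceding the corollary: Theorem~\ref{theorem:surjective} combined with the facts that group homology commutes with direct limits and that $\dim V(\lambda_i)\to\infty$ as $g\to\infty$.

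First I would fix $i\geq 1$ and, for each $g\geq i+2$, invoke the displayed consequence of Theorem~\ref{theorem:surjective} stated just before the corollary, namely
\[
\widehat{\tau_i}\bigl(H_i(\I_{g,1},\Q)\bigr)\supseteq V(\lambda_{i+2})\oplus V(\lambda_i)\subseteq \bwedge^{i+2}H_g.
\]
Next I would apply the commutative square quoted in the excerpt. The bottom arrow $\bwedge^{i+2}H_g\to \bwedge^{i+2}H_{g+1}$, being the $(i+2)$nd exterior power of the inclusion $H_g\hookrightarrow H_{g+1}$, is injective; therefore every class in $H_i(\I_{g,1},\Q)$ detected by $\widehat{\tau_i}$ remains detected after stabilization to $H_i(\I_{g+k,1},\Q)$ for each $k\geq 0$.

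Passing to the direct limit, and using that group homology commutes with filtered colimits, I would obtain a stable homomorphism
\[
\widehat{\tau_i}\colon H_i(\I_{\infty,1},\Q)\longrightarrow \varinjlim_{g}\bwedge^{i+2}H_g
\]
whose image contains an injective copy of $V(\lambda_i)\subseteq \bwedge^{i+2}H_g$ for every $g\geq i+2$. Since $\dim V(\lambda_i)\to\infty$ as $g\to\infty$, this image is infinite-dimensional, and hence so is $H_i(\I_{\infty,1},\Q)$.

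There is essentially no obstacle here: the substantive content has already been handled in Theorem~\ref{theorem:surjective}. What remains is purely formal---the functoriality of exterior powers guaranteeing injectivity on the target side, the exactness of filtered colimits in computing group homology, and the elementary dimension asymptotics for the fundamental symplectic representations---none of which requires new argument. If anything, a mild technical point worth flagging is simply that one should use $V(\lambda_i)$ rather than $V(\lambda_{i+2})$ to get unbounded dimension for every $i\geq 1$, including $i=1$, where $V(\lambda_{i+2})=V(\lambda_3)$ already suffices but $V(\lambda_1)=H$ is the cleanest choice.
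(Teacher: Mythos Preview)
Your proposal is correct and follows essentially the same argument as the paper: it assembles the commutative square, the stability of classes detected by $\widehat{\tau_i}$, the fact that homology commutes with direct limits, and the dimension growth of $V(\lambda_i)$, exactly as in the paragraph preceding the corollary. The only addition is that you spell out the injectivity of $\bwedge^{i+2}H_g\to\bwedge^{i+2}H_{g+1}$ and the passage to the colimit a bit more explicitly, which is fine.
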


This greatly contrasts with the situation for the stable mapping class
group, whose odd-dimensional homology vanishes, and whose
even-dimensional homology has finite rank (see \cite{MW}).  

\para{A stability conjecture for $H_\ast(\I_{g,1},\Q)$} The stability
of the homology classes we construct, together with the presence of
nontrivial $\Sp(2g,\Z)$--modules in $H_i(\I_{g,1},\Q)$, shows that the
classical kind of homological stability, satisfied for example by
$\GL_n, \Out (F_n)$, and the mapping class group, does not hold for
the Torelli group.  However, our results provide evidence for a new
kind of ``representation-theoretic stability'', which we now describe.

We begin with the simplest, quickest-to-state form of our conjecture.
When we want to emphasize the group that acts, we will denote by
$V(\lambda)_{2g}$ the irreducible
$\Sp(2g,\Z)$--representation with highest weight vector $\lambda$.

\begin{conjecture}[Representation stability, I]
\label{conjecture:repstab1}
The homology of the Torelli group is \emph{representation stable} with
respect to $g$: for each $i\geq 1$ and each $g$ sufficiently large
(depending on $i$), we have that the $\Sp(2g,\Z)$--module
$H_i(\I_{g,1},\Q)$ contains the representation $V(\lambda)_{2g}$ with
some multiplicity $0\leq m\leq \infty$ if and only if for each $h\geq
g$ the $\Sp(2h,\Z)$--module $H_i(\I_{h,1},\Q)$ contains the
representation $V(\lambda)_{2h}$ with multiplicity $m$, and similarly
for $\I_{g,\ast}$ and $\I_g$.
\end{conjecture}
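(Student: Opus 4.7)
Because Conjecture~\ref{conjecture:repstab1} is genuinely open, the following is a research proposal rather than a routine proof sketch. The plan is to import the machinery of \emph{representation stability} and adapt it from the settings of $\GL_n$, the symmetric groups, and the mapping class group to the symplectic setting of $\Sp(2g,\Z)$ acting on $H_i(\I_{g,1},\Q)$.

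The first step is organizational: fix a formal highest weight $\lambda = \sum c_j \lambda_j$ and, for each $g$ above the threshold $\ell(\lambda)$ where $\lambda$ makes sense, let $V(\lambda)_{2g}$ denote the resulting irreducible $\Sp(2g,\Z)$--representation. Write $m_\lambda^i(g) \coloneq [H_i(\I_{g,1},\Q) : V(\lambda)_{2g}]$; the goal is to show $m_\lambda^i(g)$ is eventually constant in $g$. First I would construct an explicit $\Sp(2g,\Z)$--equivariant chain-level stabilization map lifting the natural inclusion $\I_{g,1} \hookrightarrow \I_{g+1,1}$, and study its effect on each isotypic piece using the branching rules for $\Sp(2g,\Q) \subset \Sp(2g+2,\Q)$, which are governed by the symplectic analogue of the Littlewood--Richardson rule.

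Next I would seek a geometric or algebraic resolution of $\I_{g,1}$ yielding an Sp-equivariant spectral sequence converging to $H_*(\I_{g,1},\Q)$; candidates are the Torelli complex, the complex of cycles of Bestvina--Bux--Margalit, or a subcomplex of the curve complex tailored to Johnson's bounding-pair generators. The conjecture would follow if one could establish (a) that the $E_2$--page is representation stable in $g$, and (b) that the differentials preserve the stability pattern, so that the spectral sequence can be compared rank-by-rank across $g$. Theorem~\ref{theorem:surjective} then supplies a concrete base case: the summands $V(\lambda_{i+2}) \oplus V(\lambda_i)$ must appear stably with multiplicity at least one in the stable range.

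The main obstacle is that no sufficiently rigid geometric model for $\I_{g,1}$ with tractable isotropy groups is currently available, and the global structure of $\I_{g,1}$ as a group is poorly understood; Johnson's bounding-pair generators produce far too many relations at the chain level to yield clean stable patterns, and known complexes on which $\I_{g,1}$ acts do not obviously enjoy the equivariance needed for a branching-rule analysis. A more realistic first target would be to verify the conjecture for $H_1(\I_{g,1},\Q)$, where Johnson's theorem gives the decomposition $\bwedge^3 H \approx V(\lambda_3) \oplus V(\lambda_1)$ with multiplicities manifestly independent of $g$, and then to attempt $H_2$ by combining Hain's continuous cohomology computations with Corollary~\ref{cor:tau2}. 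Obtaining uniform bounds on $\ell(\lambda)$ as a function of $i$, rather than only pointwise stability, would amount to the stronger notion of \emph{uniform representation stability} that one ultimately wants.
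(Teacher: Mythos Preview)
The statement you were asked to prove is Conjecture~\ref{conjecture:repstab1}, and the paper does not prove it: it is explicitly labeled and treated as an open conjecture. You recognized this correctly and wrote a research proposal rather than a proof, which is the appropriate response. There is therefore no ``paper's own proof'' against which to compare your approach.

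For context, what the paper actually contributes toward the conjecture is evidence rather than a proof strategy: Theorem~\ref{theorem:surjective} shows that certain irreducible summands $V(\lambda_{i+2})\oplus V(\lambda_i)$ (and $V(\lambda_{i-2})$ for even $i$) appear in the image of $\tau_i$ for all $g$ in the relevant range, so their multiplicities in $H_i(\I_{g,1},\Q)$ are bounded below stably; Johnson's computation of $H_1$ and the Hain and Sakasai computations for $H_2(\I_g)$ and $H_3(\I_g)$ described in \S\ref{sec:HS} give low-dimensional confirmation; and the paper notes that Boldsen--Dollerup subsequently established the surjectivity clause of the companion Conjecture~\ref{conjecture:repstab2} for $H_2$. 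Your proposed line of attack via an $\Sp$--equivariant spectral sequence built from a curve-complex-type resolution is reasonable and is in the spirit of later work in the representation stability literature, but it goes well beyond anything the paper attempts; the paper itself offers no roadmap toward a proof.
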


Applying a result of Kawazumi--Morita \cite[Theorem 5.5]{KM}, it can
be deduced that the truth of this conjecture for $\I_{g,\ast}$ is
equivalent to the truth of the conjecture for $\I_g$. We expect that
the conjecture for $\I_{g,1}$ is similarly equivalent.

Morita has conjectured \cite[Conjecture 3.4]{Mo1} that the
$\Sp$--invariant stable cohomology of $\I_{g,1}$ is generated by the
even Miller--Morita--Mumford classes.  Morita's Conjecture would immediately
imply the special case of Conjecture \ref{conjecture:repstab1} when
$V(\lambda)$ is the trivial representation.
 
We would like to refine Conjecture \ref{conjecture:repstab1} by giving
a more direct comparison of the homology of different Torelli groups.
Of course we cannot ask for an isomorphism of $H_i(\I_{g,1},\Q)$ and
$H_i(\I_{h,1},\Q)$ as modules since the first is an
$\Sp(2g,\Z)$--module and the second is an $\Sp(2h,\Z)$--module.
However, there are meaningful injectivity and surjectivity statements
one can ask for, as we will see in Conjecture
\ref{conjecture:repstab2} below.

Our main conjecture makes predictions about the finite-dimensional
part of $H_i(\I_{g,1},\Q)$.  We define the \emph{finite-dimensional
  homology} $H_i(\I_{g,1},\Q)^{\fd}$ to be the subspace of
$H_i(\I_{g,1},\Q)$ consisting of those vectors whose
$\Sp(2g,\Z)$--orbit spans a finite-dimensional vector space.

\begin{conjecture}[Representation stability, II]
\label{conjecture:repstab2}
For each $i\geq 1$ and each $g$ sufficiently large (depending on $i$),
the following hold: \medskip

\noindent {\bf Finite-dimensionality: }The natural map $i_\ast\colon
H_i(\I_{g,1},\Q)^{\fd}\to H_i(\I_{g+1,1},\Q)$ induced by the inclusion
$i\colon\I_{g,1}\to\I_{g+1,1}$ has image contained in $H_i(\I_{g+1,1},\Q)^{\fd}$.

\medskip
\noindent {\bf Injectivity: } The natural map
$i_\ast\colon H_i(\I_{g,1},\Q)^{\fd}\to H_i(\I_{g+1,1},\Q)$  is injective.

\medskip
\noindent {\bf Surjectivity: }The span of the $\Sp(2g+2,\Z)$--orbit of
$i_\ast(H_i(\I_{g,1},\Q)^{\fd})$ equals all of
$H_i(\I_{g+1},\Q)^{\fd}$.

\medskip
\noindent {\bf Rationality:} Every irreducible
$\Sp(2g,\Z)$--subrepresentation in $H_i(\I_{g,1},\Q)^{\fd}$ is the
restriction of an irreducible $\Sp(2g,\Q)$--representation.

\medskip
\noindent {\bf Type preserving: }For any representation
$V(\lambda)_{2g}\subset H_i(\I_{g,1},\Q)^{\fd}$, the span of the
$\Sp(2g+2,\Z)$--orbit of $V(\lambda)_{2g}$ is isomorphic to
$V(\lambda)_{2g+2}$.
\end{conjecture}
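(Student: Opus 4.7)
The plan is to attack the five clauses of Conjecture~\ref{conjecture:repstab2} in sequence, leveraging the algebraic structure of the $\Sp$-action to reduce representation-theoretic statements to more tractable ones about individual irreducible subrepresentations. Throughout I work inside a single $V(\lambda)_{2g} \subset H_i(\I_{g,1},\Q)^{\fd}$, since both the finite-dimensionality hypothesis and the desired conclusions respect the isotypic decomposition.

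\emph{Rationality} is essentially immediate from the Irreducibility Remark: by Borel density, $\Sp(2g,\Z)$ is Zariski dense in $\Sp(2g,\Q)$, so any finite-dimensional $\Sp(2g,\Z)$-invariant subspace is automatically $\Sp(2g,\Q)$-invariant, and the two notions of irreducibility coincide. For \emph{Type Preserving} (and thence \emph{Finite-dimensionality}) I would argue conditionally on Injectivity: then $i_\ast V(\lambda)_{2g}$ is a nonzero $\Sp(2g,\Z)$-submodule of $H_i(\I_{g+1,1},\Q)$, and its $\Sp(2g+2,\Z)$-span $W$ is a subrepresentation. Applying Rationality to finite-dimensional pieces of $W$ and using the standard branching rules for $\Sp(2g,\Q) \hookrightarrow \Sp(2g+2,\Q)$, the only algebraic $\Sp(2g+2,\Q)$-irreducible whose restriction is compatible with the natural inclusion of weight lattices and contains $V(\lambda)_{2g}$ as required is $V(\lambda)_{2g+2}$, identifying $W$ with $V(\lambda)_{2g+2}$ and yielding both clauses.

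For \emph{Surjectivity}, I would use the explicit cycles constructed in the proof of Theorem~\ref{theorem:surjective}. Abelian cycles from bounding-pair maps supported in $S_{g,1} \subset S_{g+1,1}$ provide $\Sp(2g,\Z)$-generators for the relevant $V(\lambda)_{2g}$ inside $H_i(\I_{g,1},\Q)^{\fd}$; choosing a symplectic basis for $S_{g+1,1}$ extending one for $S_{g,1}$, a highest-weight generator of any putative $V(\lambda)_{2g+2} \subset H_i(\I_{g+1,1},\Q)^{\fd}$ can be translated by an element of $\Sp(2g+2,\Z)$ into the image of $\I_{g,1}$, giving Surjectivity modulo the orbit.

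The central obstacle will be \emph{Injectivity}. There is no obvious retraction $\I_{g+1,1} \to \I_{g,1}$, and the appearance of new stable homology at each stabilization (Corollary~\ref{corollary:inftor}) rules out naive fixed-point arguments. A plausible route is a Hochschild--Serre spectral sequence attached to an exact sequence relating $\I_{g+1,1}$ to $\I_{g,1}$ through a kernel of free-group type, combined with inductive control from the computations of $\tau_i$ in this paper; the real difficulty is then handling the higher differentials, since the stable homology of $\I_{\infty,1}$ is infinite-dimensional and a priori nothing prevents differentials from eating our finite-dimensional classes. I expect a complete proof to require either a Malcev/unipotent completion argument in the spirit of Hain~\cite{Ha1}, or a genuinely new geometric input such as a highly-connected $\I_{g,1}$-complex analogous to the curve complex. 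Absent such input, a reasonable intermediate target is to verify the conjecture in a bounded range of $i$, using the explicit image computations of Theorem~\ref{theorem:surjective} together with the branching-rule bookkeeping above.
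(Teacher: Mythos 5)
This statement is a \emph{conjecture} in the paper, not a theorem: the authors offer no proof, only evidence (Johnson's computation of $H_1$, Theorem~\ref{theorem:surjective}, the Hain and Sakasai decompositions, and the later Boldsen--Dollerup result for surjectivity in $H_2$). So there is no proof in the paper to compare against, and your proposal does not close the gap either --- you yourself concede that \textbf{Injectivity} is open and only sketch speculative routes. That concession alone means the proposal is not a proof.

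Beyond that, two of the steps you treat as settled would fail. First, \textbf{Rationality} is not ``essentially immediate from Borel density.'' Borel density tells you that the restriction to $\Sp(2g,\Z)$ of an irreducible algebraic representation stays irreducible; it does not tell you that an arbitrary finite-dimensional $\Sp(2g,\Z)$-subrepresentation of $H_i(\I_{g,1},\Q)^{\fd}$ comes from an algebraic representation. By Margulis superrigidity (or the congruence subgroup property), such a representation either virtually extends to $\Sp(2g,\Q)$ \emph{or factors through a finite quotient}, and the entire content of the Rationality clause --- as the paper's first remark after the conjecture makes explicit --- is to rule out the finite-quotient possibility. Nothing in your argument excludes it. Second, your deduction of \textbf{Type preserving} from branching rules is directly contradicted by the paper's second remark: it \emph{is} possible to embed $V(\lambda_i)_{2g}$ into $V(\lambda_{i+1})_{2g+2}$ so that the $\Sp(2g+2,\Q)$-span of the image is all of $V(\lambda_{i+1})_{2g+2}$, so compatibility with the inclusion of weight lattices does not single out $V(\lambda)_{2g+2}$; the clause is stated precisely to exclude this phenomenon, not because it follows formally. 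Your Surjectivity sketch likewise only addresses classes detected by $\tau_i$, which by Theorem~\ref{theorem:noninjective} is far from all of $H_i(\I_{g+1,1},\Q)^{\fd}$. The honest conclusion is that the statement remains a conjecture.
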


\para{Remarks}
\begin{enumerate}
\item A form of the Margulis Superrigidity Theorem (see \cite{Ma},
  Theorem VIII.B) gives that any finite-dimensional representation
  (over $\C$) of $\Sp(2g,\Z)$ either (virtually) extends to a (rational)
  representation of $\Sp(2g,\R)$ or factors through a finite
  group\footnote{One can also use the solution to the congruence
    subgroup property for $\Sp(2g,\Z), g>1$ here; see \cite{BMS}.}.
  The ``rationality'' statement of Conjecture
  \ref{conjecture:repstab2} is meant to rule out the latter
  possibility for subrepresentations of $H_i(\I_{g,1},\Q)^{\fd}$.

\item It is possible to embed the $\Sp(2g,\Q)$--module
  $V(\lambda_i)_{2g}$ into the $\Sp(2g+2,\Q)$--module
  $V(\lambda_{i+1})_{2g+2}$ so that the $\Sp(2g+2,\Q)$--span of the
  image is all of $V(\lambda_{i+1})_{2g+2}$, and similarly for other
  pairs of irreducible representations.  The ``type preserving''
  statement in Conjecture \ref{conjecture:repstab2} is meant to rule out 
  this type of phenomenon.

\item Theorem~\ref{theorem:surjective} shows that the ``stable range''
  in Conjecture~\ref{conjecture:repstab2}, meaning the smallest $g$
  for which $H_i(\I_{g,1},\Q)^{\fd}$ stabilizes, must be at least $i$.

\item Mess \cite{Me} proved that $H_1(\I_{2,1},\Q)$ contains an
  infinite-dimensional, irreducible permutation
  $\Sp(4,\Z)$--module. Similarly, the classes in
  $H_{3g-2}(\I_{g,1},\Q)$ found by Bestvina--Bux--Margalit \cite{BBM}
  span an infinite-dimensional, permutation
  $\Sp(2g,\Z)$--module. Neither of these is ``stable'' in $g$; one
  might hope that stably, such representations do not arise, and all
  irreducible $\Sp$--submodules of $H_i(\I_{g,1},\Q)$ are
  finite-dimensional for $g\gg i$.

\item Conjecture~\ref{conjecture:repstab1} and Conjecture~\ref{conjecture:repstab2} would give an affirmative answer to
  Question 7.9 of Hain--Looijenga \cite{HL}, which asked ``Is
  $H^k(T_g)$ expressible as an $\Sp_{2g}(\Z)$--module in a way that is
  independent of $g$ if $g$ is large enough?''
\end{enumerate}

\para{Evidence}
As mentioned above, Theorem~\ref{theorem:surjective} provides evidence
in every dimension for Conjectures~\ref{conjecture:repstab1} and
\ref{conjecture:repstab2}. Both conjectures are true in dimension 1, by
Johnson's computation that $H_1(\I_{g,1})\approx
H_1(\I_{g,\ast})\approx V(\lambda_3)\oplus V(\lambda_1)$ and
$H_1(\I_g)\approx V(\lambda_3)$. Other than this, very little is known
about the homology of the Torelli group. However, in low dimensions we
have work of Hain, who found a large subspace of $H_2(\I_g)$, and
Sakasai, who found a large subspace of $H_3(\I_g)$. We describe their
methods and state their results in Section~\ref{sec:HS}; the resulting
subspaces satisfy Conjecture~\ref{conjecture:repstab2}.

Since this paper was first distributed, Boldsen--Dollerup \cite{BD} have proved that the surjectivity condition in Conjecture~\ref{conjecture:repstab2} holds for $H_2(\I_{g,1};\Q)$ as long as $g\geq 6$.\\

 In the paper \cite{CF} we situate these conjectures in the much broader framework
of a general theory of ``representation stability''.

\para{Outline of paper} In \S\ref{section:examples} we outline our general approach to computing the $\tau_i$, and explicitly work
out $\tau_0$ and $\tau_1$ as a warmup.  In \S\ref{sec:tools} we give two ways of 
computing $\tau_i$.  We first show how to compute the image under $\tau_i$ of the
``product'' of a cycle supported on a subsurface with a bounding pair
map.  We then give a vanishing result for cycles built from gluing subsurfaces
along a pair-of-pants.  We then apply these tools in order to compute
both vanishing and nonvanishing results for $\tau_i$ of abelian cycles
in $H_i(\I_{g,\ast})$. Section~\ref{sec:gysin} gives a computation of
$\tau_i$ on cycles in $H_i(\I_{g,\ast})$ that are surface bundles over
certain tori in $H_{i-2}(\I_g)$.  This computation reveals the
phenomenon of an even/odd dichotomy for the nonvanishing/vanishing of
cycles; in particular we obtain many new nonzero classes in
$H_i(\I_{g,\ast})$.  In \S\ref{sec:imageandkernel} we use all the
computations above to complete the proofs of
Theorem~\ref{theorem:noninjective} and
Theorem~\ref{theorem:surjective}. We conclude by explaining how
theorems of Hain and Sakasai give further evidence for
Conjecture~\ref{conjecture:repstab1} and
Conjecture~\ref{conjecture:repstab2}.

\para{Acknowledgements} We are extremely grateful to the referee for all of their comments and corrections. 

\section{Setup and first examples}
\label{section:examples}

In this section, we outline the framework of the computations in this paper. We then compute $\tau_0$ and $\tau_1$ as the simplest
examples of our methods. For the rest of the paper, all homology groups
are taken with coefficients in $\Q$, although the reader may just as
well imagine the coefficients are $\Z$ if preferred.

\para{Bundles representing homology classes} As mentioned in the introduction, the bundle $S_{g,\ast}\to \Tor^\ast_{g,\ast}\overset{\pi}{\to}\Tor_{g,\ast}$ described in \eqref{eq:univbundle} is the \emph{universal} genus $g$ surface bundle equipped with a section and a trivialization of its fiberwise homology. (All the surface bundles we consider in this paper will be endowed with such a trivialization, namely an identification of the homology of each fiber with $H_1(S_g)$.) This means that for any base $B$, there is a bijective correspondence between such bundles over $B$ up to isomorphism and maps $f\colon B\to \Tor_{g,\ast}$ up to homotopy; the correspondence is induced by pulling back the universal bundle along the map $f$.

Given a homology class $\sigma\in H_i(\Tor_{g,\ast})$, we say that a bundle $S_{g,\ast}\to E\to B$ and a homology class $x\in H_i(B)$ \emph{represent} $\sigma$ if the induced homology class $f_*(x)\in H_i(\Tor_{g,\ast})$ is equal to $\sigma$. It is sometimes mentally simplifying to assume that $B$ is a closed manifold, which can be done as follows.
Thom \cite[Theorem II.29]{Th} proved  that every homology class in a closed orientable manifold has an integral multiple which can be represented by (the fundamental class of) a closed submanifold. This can be strengthened to show that every homology class in any CW complex has an odd integral multiple which can be represented by a closed submanifold,
see e.g. Conner \cite[Corollary 15.3]{Co}. Thus every homology class $\sigma\in H_i(\Tor_{g,\ast})$ has a multiple represented by the fundamental class $[B]\in H_i(B)$ for a bundle $S_{g,\ast}\to E^{i+2}\to B^i$ of closed manifolds. Although this assumption is not logically necessary for our arguments, we let it influence us by often referring to the representing homology class as $[B]\in H_i(B)$.

\para{Parametrized Abel--Jacobi maps}
Given a bundle $S_{g,\ast}\to E\to B$ and homology class $[B]\in H_i(B)$ representing $\sigma\in H_i(\Tor_{g,\ast})\approx H_i(\I_{g,\ast})$, we can use the bundle $E\to B$ to compute the Johnson invariant $\tau_i(\sigma)$, as follows. The globalized Abel--Jacobi map \eqref{eq:globalAJ} restricts to a map $J_E\colon E\to T^{2g}$ defined up to homotopy. We remark that the target should be thought out of not just as a torus $T^{2g}$, but as a
  $K(H_\Z,1)$, so that choosing a basis for $H_\Z$ gives corresponding
  coordinates on $T^{2g}$.

We call the map $J_E\colon E\to T^{2g}$ a \emph{parametrized Abel--Jacobi map}, since on each fiber
  it restricts to a map homotopic to the classical Abel--Jacobi map. Since $T^{2g}$ is aspherical, it is determined (up to
  homotopy) by the induced map on fundamental group, which is determined by the following two properties:
  \begin{enumerate}
  \item On the fiber $S_g$ the map $J_E$ induces the abelianization
    $\pi_1(S_g)\to H_\Z$.
  \item On the image of the section $B\to E$ the map $J_E$ is constant.
  \end{enumerate}

In this situation, the preimage of $[B]$ in $E$ is a class $[E]\in H_{i+2}(E)$. Then the Johnson invariant $\tau_i$ can be computed as follows:
\begin{equation}
\label{eq:computetaui}
\tau_i(\sigma)=(J_E)_*[E]\in H_{i+2}(T^{2g})\approx \bwedge^{i+2} H
\end{equation} The key to our computations in this paper is to find convenient models for $E\to B$ and for the parametrized Abel--Jacobi map $J_E$ so that $(J_E)_*[E]$ can be calculated explicitly.

\para{Computing \boldmath$\tau_0$} The intersection form on $H=H_1(S)$
can be represented by an element $\omega\in \bwedge^2 H$; if
$a_1,b_1,\ldots,a_g,b_g$ is a symplectic basis, we
have \[\omega=a_1\wedge b_1+\cdots+a_g\wedge b_g.\] Since $\tau_0$ is
a map from $H_0(\I_{g,\ast})\approx \Q$, it is determined by the image
of the generator.

\begin{proposition}\label{prop:tau0}
  The image of the generator under $\tau_0\colon
  H_0(\I_{g,\ast})\to \bwedge^2 H$ is $\omega$.
\end{proposition}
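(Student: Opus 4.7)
The plan is to unwind the definition of $\tau_0$ on the generator of $H_0(\I_{g,\ast})\cong \Q$, which is represented by the simplest possible bundle over a point, and then recognize the resulting computation as the standard statement that the Abel--Jacobi map sends the fundamental class of $S_g$ to the symplectic form on its Jacobian.

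More concretely, the generator of $H_0(\I_{g,\ast})$ corresponds to a single point $B=\{\text{pt}\}\to \Tor_{g,\ast}$. The pulled-back bundle is just a fiber $E=S_{g,\ast}$, with the Gysin preimage of the point being the fundamental class $[S_g]\in H_2(S_g)$. The parametrized Abel--Jacobi map $J_E$ specializes in this degenerate case to the classical Abel--Jacobi map $j\colon S_g\to T^{2g}\simeq K(H_\Z,1)$, characterized up to homotopy by the fact that it induces the abelianization $\pi_1(S_g)\twoheadrightarrow H_\Z$. By formula \eqref{eq:computetaui} we have
\[\tau_0(1)=j_*[S_g]\in H_2(T^{2g})\cong \bwedge^2 H.\]

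To evaluate $j_*[S_g]$, I would pair it against an arbitrary cohomology class in $H^2(T^{2g})\cong \bwedge^2 H^*$. Given $\alpha,\beta\in H^1(T^{2g})\cong H^*$, naturality of the cup product gives
\[\langle j_*[S_g],\alpha\wedge\beta\rangle=\langle j^*\alpha\cup j^*\beta,[S_g]\rangle.\]
Since $j$ induces the abelianization on $\pi_1$, the induced map $j^*\colon H^1(T^{2g})\to H^1(S_g)$ is the canonical isomorphism $H^*\to H^*$, so the right-hand side is exactly the algebraic intersection number of the Poincar\'e duals of $\alpha$ and $\beta$ on $S_g$. In the symplectic basis $a_1,b_1,\ldots,a_g,b_g$, these intersection numbers read off the coefficients of $\omega=\sum a_i\wedge b_i$, proving $j_*[S_g]=\omega$.

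There is no real obstacle here; the statement is essentially a repackaging of the fact that the Abel--Jacobi map has degree one onto each $(a_i,b_i)$ sub-torus and degree zero onto the mixed sub-tori $T^2_{a_i,b_j}$ for $i\neq j$. The only thing to be careful about is the identification of $H_2(T^{2g})$ with $\bwedge^2 H$, which is fixed by our choice of basis for $H_\Z$ and so is compatible with the pairing computation above.
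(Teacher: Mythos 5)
Your proof is correct, but it takes a genuinely different route from the paper's. Both arguments begin the same way, reducing $\tau_0(1)$ to $j_*[S_g]\in H_2(T^{2g})\approx\bwedge^2 H$ for the classical Abel--Jacobi map $j$. From there you evaluate $j_*[S_g]$ dually: pair against $\alpha\cup\beta$ for $\alpha,\beta\in H^1(T^{2g})$, pull back by naturality of the cup product, and observe that since $j^*$ is the canonical isomorphism on $H^1$, the pairing computes the intersection form on $S_g$, whose matrix in a symplectic basis is exactly that of $\omega=\sum a_i\wedge b_i$. This is clean and entirely standard, and for this isolated statement it is arguably the quickest argument. The paper instead builds an explicit geometric model of $j$: collapse $S_g$ onto a wedge of tori $Y=\bigvee T_i$, include each $T_i$ into $T^{2g}$ as the $(a_i,b_i)$--subtorus, and track the fundamental class directly as $[S_g]\mapsto\sum[T_i]\mapsto\sum a_i\wedge b_i$. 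What the paper's approach buys is the reusable machinery: this ``degenerate the surface to a union of tori and push the fundamental class through'' construction is precisely what gets parametrized over the base in the proofs of Proposition~\ref{prop:tau1}, Proposition~\ref{prop:crossTstable}, and the later abelian-cycle computations, whereas your duality argument does not generalize as readily to those parametrized settings. As written your argument is complete; the only point to be explicit about (which you flag) is that the identification $H_2(T^{2g})\approx\bwedge^2 H$ and the pairing with $\bwedge^2 H^*$ are both fixed by the same basis of $H_\Z$, so the sign conventions match up and the intersection numbers really do read off the coefficients of $\omega$.
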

\begin{proof}
  Since the generator of $H_0(\I_{g,\ast})$ is induced by the
  inclusion of a point, we see that the image of $\tau_0$ is equal to
 $j_\ast[\Sigma_g] \in
  H_2(T^{2g})=\bwedge^2H$, the image of the fundamental class $[\Sigma_g]$ under the Abel--Jacobi map, which we now compute.
    
  \begin{figure}[h]
    \label{fig:wedge}
    \centering
      \includegraphics[width=120mm]{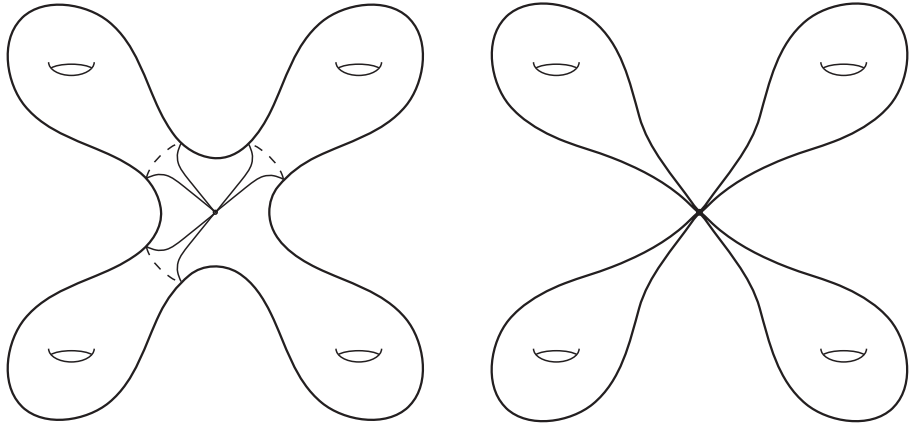}
    \caption{The surface $S_g$ and its quotient $Y=\bigvee T_i$.}
  \end{figure}
  We begin by giving an explicit construction of a map $j$ homotopic to the Abel--Jacobi map  that will be
  useful for our purposes. We will sometimes refer to such a map $j$ as \emph{an Abel--Jacobi map}, since it is uniquely defined only up to homotopy. First, let $Y=\bigvee_{i=1}^g T_i$ be
  the wedge of 2--dimensional tori. There is a natural quotient map
  $S_g\to Y$, obtained for example by collapsing a graph as in
  Figure~\figwedge.   In any torus, specifying $k$ distinct coordinates determines a
  $k$--dimensional subspace homeomorphic to a torus $T^k$. The
  coordinates of $T^{2g}$ are labeled by the symplectic basis
  $a_1,b_1,\ldots,a_g,b_g$. Identify the $i$th torus $T_i$ with the
  torus $T^2\subset T^{2g}$ determined by the $a_i$ and $b_i$
  coordinates. These identifications agree at the origins of the tori
  $T_i$, and thus induce an inclusion $Y=\bigvee T_i\hookrightarrow
  T^{2g}$. The composition  $j\colon S_g\to Y\to T^{2g}$
  is homotopic to the Abel--Jacobi map; to see this, it is enough to
  observe that the generators $a_i$ and $b_i$ of $\pi_1(S_g)$ are
  taken to the corresponding elements of $\pi_1(T^{2g})=H_\Z$.

  Finally, we must find $j_*[S_g]\in H_2(T^{2g})$.  Under the quotient map $S_g\to Y= \bigvee
  T_i$, the fundamental class $[S_g]\in H_2(S_g)$ is sent to $\sum
  [T_i]\in H_2(Y)$. Then $T_i$ is included as the torus determined by
  the $a_i$ and $b_i$ coordinates; under the natural isomorphism
  $H_k(T^{2g})\approx \bwedge^k H$, this torus represents
  $a_i\wedge b_i$. Thus we have
  \[j_\ast\colon [S_g]\mapsto \sum [T_i]\mapsto \sum a_i\wedge
  b_i=\omega\] as claimed.
\end{proof}

\para{Computing \boldmath$\tau_1$}
In \cite{JoA}, Johnson used the action of $\I_{g,\ast}$ on the second
universal $2$--step nilpotent quotient of $\pi_1(S_{g,\ast})$ to
define in a purely algebraic way an $\Sp(2g,\Z)$--equivariant
homomorphism $\tauJ:\I_{g,\ast}\to\bwedge^3H$ which is now called the
\emph{Johnson homomorphism}.

Recall that a \emph{bounding pair map} in $\I_{g,\ast}$ is a
composition of two Dehn twists $T_\alpha T_\beta^{-1}$, where $\alpha$
and $\beta$ are nonhomotopic, homologous, disjoint nonseparating
simple closed curves.  For any bounding pair map, up to homeomorphism of $S_g$, the curves $\alpha$ and $\beta$ are of the form depicted in Figure~\figtau{}a. Let $S'$ be the component of $S_g\setminus (\alpha\cup\beta)$ not containing the basepoint, and fix $1<k\leq g$ so that $S'$ has genus $k-1$. 
Let
$\{a_1,b_1,\ldots, a_g,b_g\}$ be a symplectic basis for $H_1(S_g)$ with the
property that $\alpha$ (oriented with $S'$ on the left) is homologous to $a_k$, and so that $\{a_1,b_1,\ldots,a_{k-1},b_{k-1},a_k\}$ descends to a basis for $H_1(S')$.
   Johnson showed
in \cite{JoA} that \begin{equation}\label{eq:Johnson}
  \tauJ(f)=(a_1\wedge b_1+\cdots+a_{k-1}\wedge b_{k-1})\wedge a_k.
\end{equation}

The following proposition was stated by Johnson in \cite{Jo} as the
motivation for investigating the maps $\tau_i$. Hain gave a proof in
\cite{Ha1} using the work of Sullivan and the cup product structure on
the cohomology of mapping tori. Our proof is elementary, and more
importantly, it can be generalized to higher-dimensional cycles. 
Indeed, the ideas introduced in this proof will
appear throughout Sections~\ref{sec:tools} and \ref{sec:gysin}.

\begin{proposition}[Johnson, Hain \cite{Ha1}]
 \label{prop:tau1}
  The map $\tau_1\colon H_1(\I_{g,*})\to \bwedge^3 H$ coincides with
  the Johnson homomorphism $\tau_J$.
\end{proposition}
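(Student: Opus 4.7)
Since both $\tau_1$ and $\tauJ$ are $\Sp(2g,\Z)$--equivariant homomorphisms out of $H_1(\I_{g,\ast};\Q)$, and since Johnson's theorem that $\tauJ$ is a rational isomorphism combined with Powell's generating set for $\I_{g,\ast}$ implies that $H_1(\I_{g,\ast};\Q)$ is spanned by classes of bounding pair maps, it suffices to verify the identity $\tau_1([f]) = \tauJ(f)$ on a single bounding pair $f=T_\alpha T_\beta^{-1}$, where the target value is given by the explicit formula \eqref{eq:Johnson}.

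The class $[f]$ is represented by the mapping-torus bundle $S_{g,\ast}\to M_f\to S^1$, so $\tau_1([f]) = J_\ast[M_f]\in H_3(T^{2g})\cong \bwedge^3 H$ for a parametrized Abel--Jacobi map $J\colon M_f\to T^{2g}$. To build a convenient $J$, I would start from the wedge-of-tori Abel--Jacobi map $j = \iota\circ\rho\colon S_g\to Y=\bigvee_i T_i\hookrightarrow T^{2g}$ of Proposition~\ref{prop:tau0}, adapted to the bounding pair: with the symplectic basis of the proposition (so $[\alpha]=a_k$), select the collapsing graph so that the subsurface $S'\subset S_g$ is collapsed onto $T_1\vee\cdots\vee T_{k-1}$, with $\partial S'=\alpha\cup\beta$ collapsed to the wedge point, and so that the annular collars $N_\alpha,N_\beta$ are collapsed onto the $a_k$--circle inside $T_k$. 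Since $f$ is the identity off $N=N_\alpha\cup N_\beta$, there is a homotopy from $j$ to $j\circ f$ supported on $N$ and pinning the basepoint at $0$; use it to assemble $J\colon M_f\to T^{2g}$.

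Next, decompose $M_f = M'\cup M''$ into the mapping torus $M'$ of $f|_{S'}$ and the mapping torus $M''$ of $f|_{\overline{S''}}$, glued along $\alpha\times S^1\sqcup\beta\times S^1$. By construction $J$ is constant in the $[0,1]$-direction off $N\times[0,1]$, so on $M_f\setminus(N\times S^1)$ the image of $J$ lies in the $2$-dimensional set $j(S_g\setminus N)\subset Y$; this piece therefore contributes $0$ to $J_\ast[M_f]$ for dimensional reasons. All of $\tau_1([f])$ thus comes from the two annular mapping tori inside $N\times S^1$, on each of which $f$ acts by a single Dehn twist. In the wedge-of-tori model, the homotopy from $j|_{N_\alpha}$ to $j|_{N_\alpha}\circ T_\alpha$ can be arranged so that its $[0,1]$--trace translates the image by one full loop around the $a_k$--circle in $T_k$, while the $T_\beta^{-1}$ piece cancels the boundary artifacts on the $\beta$--side. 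The resulting pushforward of $[M_f]$ traces out precisely the wedge of $3$-tori $\bigvee_{i<k}(T_i\times S^1_{a_k})\subset T^{2g}$ with degree one, giving $\sum_{i<k}a_i\wedge b_i\wedge a_k = (a_1\wedge b_1+\cdots+a_{k-1}\wedge b_{k-1})\wedge a_k = \tauJ(f)$.

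The hard step is making the ``translate by one $a_k$--loop'' claim rigorous: one must show that the chosen homotopy from $j$ to $j\circ f$ on $N_\alpha$ (and on $N_\beta$) corresponds at the parametrized Abel--Jacobi level to a one-parameter family winding once in the $a_k$--direction, and that combined with the collapsing $\rho|_{S'}\colon S'\to T_1\vee\cdots\vee T_{k-1}$ this really produces the claimed fundamental class with the correct sign. The dimensional vanishing of the off-$N$ contribution is comparatively routine once $J$ has been arranged to be constant in $t$ there.
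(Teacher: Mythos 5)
Your overall framework (mapping torus, a parametrized Abel--Jacobi map built from a homotopy between $j$ and $j\circ f$, collapse to a union of tori, dimension counting) matches the paper's, and the reduction to bounding pair maps is also how the paper proceeds (for $g=2$ you would additionally need separating twists, on which both maps vanish). But the key geometric step is set up incorrectly: there is \emph{no} homotopy from $j$ to $j\circ f$ that is supported on $N=N_\alpha\cup N_\beta$ and pins the basepoint. Although $j$ and $j\circ f$ agree pointwise off $N$, the unique lift $\delta\colon S_g\to\R^{2g}$ of $j\circ f-j$ with $\delta(\ast)=0$ is only \emph{locally constant} off $N$: it vanishes on the component containing the basepoint, but on all of $S'$ it equals the unit vector in the $a_k$--direction. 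Any basepoint-pinning homotopy $H_t$ from $j$ to $j\circ f$ recovers this same $\delta$ (lift the path $t\mapsto H_t(p)$ and record its endpoint), so $H_t(p)$ cannot be constant in $t$ for $p\in S'$. This is not a loose end to be tightened later; it is where the answer comes from. If your localization were correct the computation would give $0$: the off--$N$ part contributes nothing by your own dimension count, while $N\times S^1$ maps into the torus $T_k$ (indeed into the $a_k$--circle and its translates along itself), which is too low-dimensional to carry a class in $H_3(T^{2g})$. Your stated conclusion is in fact inconsistent with the localization, since the $3$--tori $T_i\times S^1_{a_k}$ with $i<k$ are swept out by the fibers over $T_i\subset S'$, not by anything inside $N\times S^1$.

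The correct construction, as in the paper, is $J(p,t)=j(p)+t\,\delta(p)$: as $t$ runs around $S^1$, the entire far subsurface $S'$ is translated once around the $a_k$--circle, so each torus $T_i$ with $i<k$ sweeps out $a_i\wedge b_i\wedge a_k$, while the pieces over $N$, over $T_k$, and over the basepoint side of the surface land in subtori of dimension at most $2$ and contribute nothing. A smaller point: the bounding pair map does not factor through the wedge $\bigvee T_i$ (collapsing $\alpha\cup\beta$ to the wedge point destroys exactly the twisting you need to record); the right quotient is a chain-like union $\bigcup T_i$ of tori meeting pairwise in at most one point, with $\alpha$ and $\beta$ sent to the two distinct points where $T_k$ meets its neighbors.
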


Proposition~\ref{prop:tau1} can be
thought of as a ``parametrized'' version of the proof of
Proposition~\ref{prop:tau0} above.  

\begin{proof}
  Building on work of Birman \cite{Bi} and Powell \cite{Po}, Johnson proved in \cite{JoB}
 that $\I_{g,*}$ is generated by bounding
  pair maps for $g\geq 3$; see Hatcher--Margalit \cite{HM} for a modern proof. (For $g=2$ separating twists are also necessary; however $\tau_J$ is known to vanish on separating twists, and $\tau_1$ vanishes on separating twists by Proposition~\ref{prop:septwist}.) Thus it suffices to check that $\tau_1$ coincides on
  bounding pair maps with Johnson's map $\tauJ$.
  
  \begin{figure}[h]
    \label{fig:tau1}
    \centering
      \includegraphics[width=115mm]{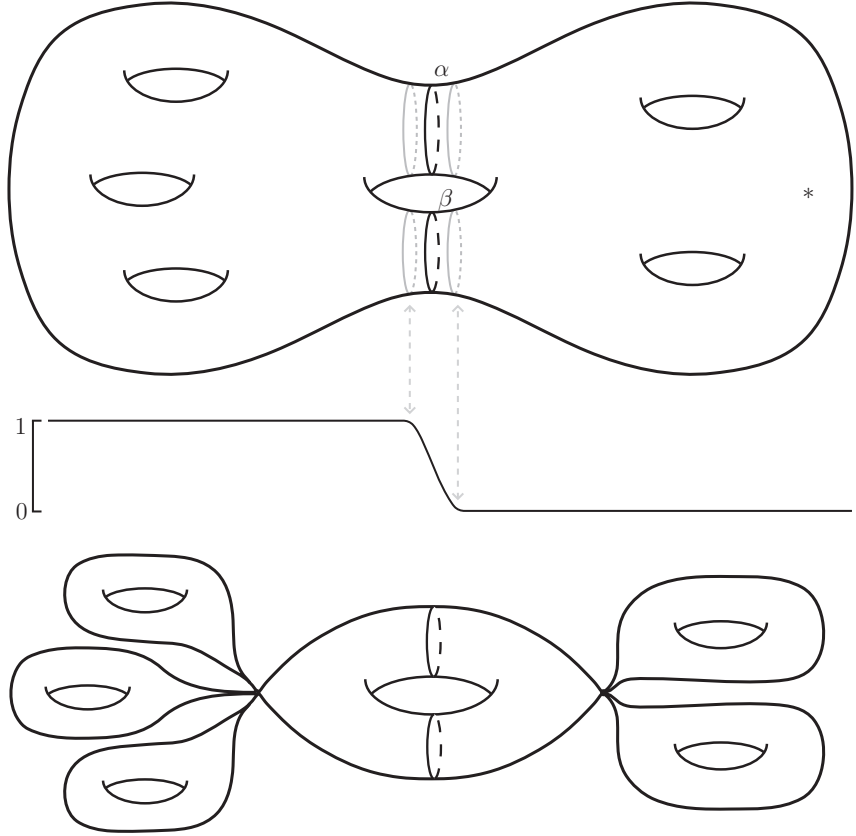}
    \caption{\textbf{a.} The surface $S_{g,*}$ and the bounding pair
      $f$. \textbf{b.} The $a_k$ component of $\delta$.\newline \textbf{c.}
    The quotient $Y=\bigcup T_i$. The torus $T_k$ is in the middle, the tori $T_i$ for $i<k$ are on the left, and the tori $T_i$ for $i>k$ are on the right.}
  \end{figure}  
  
  To compute $\tau_1(f)$, we first find a bundle $S_{g,*}\to E\to S^1$
  representing $[f]\in H_1(\I_{g,*})$. The natural choice is the
  mapping torus $S_{g,*}\to M_f\to S^1$, which can be defined as the
  quotient
  \[M_f=S_g\times [0,1]/(f(p),0)\sim (p,1).\] The image of the
  basepoint $\ast\in S_g$ in each fiber gives a section of this
  bundle.

  To describe the parametrized Abel--Jacobi map $J=J_{M_f}\colon M_f\to T^{2g}$, we will define $J$ on the cylinder $S_g\times [0,1]$ in such a way that it
  descends to $M_f=S_g\times [0,1]/\sim$. One obvious first approach
  is to define $J$ on the fiber $S_g\times \{0\}$ just by the Abel--Jacobi
  map $j$. The identification $\sim$ then forces the restriction of
  $J$ to $S_g\times \{1\}$ to be $j\circ f$. We might naively try to
  define $J$ simply by interpolating between $j$ and $j\circ f$:
  \begin{equation}
  \label{eq:firstattempt}
  J(p,t) \overset{?}{=} (1-t)\cdot j(p) + t\cdot j\circ f(p)
  \end{equation} But
  $j$ and $j\circ f$ take values in the torus $T^{2g}$, so the first
  term $(1-t)\cdot j(p)$, for example, is not well-defined. However,
  we can accomplish this idea as follows. Since $f\in\I_{g,\ast}$, the
  two maps $j\circ f$ and $j$ induce the same map on the fundamental group
  and thus are homotopic. Equivalently, their pointwise difference
  $j\circ f - j$ is homotopically trivial as a map $S_g\to T^{2g}$. We
  may thus take a lift $\delta\colon S_g\to \R^{2g}$ of $j\circ f -
  j$; that is, the unique map satisfying $\delta(\ast)=0$ and \[j\circ
  f-j=\delta\bmod{\Z^{2g}}.\] For convenience, we will take $j$ to be an Abel--Jacobi map chosen so that
  the only coordinate of $\delta$ which is nonzero is that
  corresponding to $a_k$, and in that coordinate $\delta$ is of the
  form shown in Figure~\figtau{}b. In particular $\delta(p)$ only
  depends on the ``horizontal'' coordinate of $p$ in the depiction in
  Figure~\figtau{}a.

  One way to ensure this is as follows. The twists $T_\alpha$ and
  $T_\beta$ are supported on annular neighborhoods $N_\alpha$ and
  $N_\beta$ of $\alpha$ and $\beta$ respectively. Identify these with
  $S^1\times [0,1]$ so that $T_\alpha(\theta,t)=(\theta+t,t)$ on
  $N_\alpha$ and $T_\beta^{-1}(\theta,t)=(\theta-t,t)$ on
  $N_\beta$. We define $j$ to be zero on $S_g\setminus (N_\alpha\cup N_\beta)$; on $N_\alpha$ and on
  $N_\beta$ the $a_k$ coordinate of $j$ is given by $\theta$, and all other coordinates are zero. Since
  $j=j\circ f$ outside $N_\alpha\cup N_\beta$, the function $\delta$
  is constant there. On $N_\alpha$ the $a_k$ coordinate of $j\circ
  f-j$ is given by $t$, and similarly on $N_\beta$ by $-t$. Thus
  $\delta$ has the properties claimed above.

  Now we may define $J\colon M_f\to T^{2g}$ by
  \[J(p,t)=j(p)+t\cdot\delta(p).\] Substituting $j\circ
  f-j=\delta\bmod{\Z^{2g}}$, we see that this definition realizes the idea set out in \eqref{eq:firstattempt} above. The bounding pair map $f$ does not
  factor through a wedge of tori, but it does factor through the space
  $Y=\bigcup T_i$ depicted in Figure~\figtau{}c, which is the
  union of tori $T_i$ meeting pairwise in at most 1 point. We may assume that the sympletic basis $\{a_1,b_1,\ldots,a_g,b_g\}$ was chosen so that $\{a_i,b_i\}$ descends to a basis for $H_1(T_i)$ for each $1\leq i\leq g$. It is easy
  to see that $j$ factors through $S_g\to Y$ as well, and thus so does
  $\delta$. From our explicit formula for $J$, we see that $J$ factors
  through the space \[Z\coloneq Y\times[0,1]/(f(p),0)\sim (p,1),\]
  which fibers as a bundle $Y\to Z\to S^1$. Just as $Y$ is the union
  of tori, we see that $Z$ is the union of torus bundles $T_i\to
  Z_i\to S^1$ meeting pairwise in at most a circle.The quotient $M_f\to Z$ maps the fundamental
  class $[M_f]$ to $\sum [Z_i]$. Thus it remains to understand
  $J_*[Z_i]$.

  Note that $f$ is supported on the torus $T_k$; it follows that for
  $i\neq k$ the torus bundle $T_i\to Z_i\to S^1$ is in fact a product
  $Z_i\approx T_i\times S^1$. For $i\neq k$, since $j$ and $j\circ f$
  agree on $T_i$, we have that $\delta$ is constant on $T_i$. Since
  $\delta$ is as depicted in Figure~\figtau{}b, we have that the
  $a_k$ component of $\delta$ is 1 on $T_i$ for $i<k$ and is 0 on
  $T_i$ for $i>k$. Thus when $i<k$, the restriction of $J$ to
  $Z_i\approx T_i\times S^1$ is given by
  \[J(p,t)=j(p)+(0,\ldots,0,t,0,\ldots,0).\] This is just the
  inclusion of $T^3\subset T^{2g}$ determined by the $a_i$, $b_i$, and
  $a_k$ coordinates; in particular, we have \[J_*[Z_i]=a_i\wedge
  b_i\wedge a_k \mbox{\ for \ }i<k.\] When $i>k$, we have that
  $J(p,t)=j(p)$, so the image of $J$ is contained in the 2--dimensional
  subspace determined by $a_i$ and $b_i$. Since $H_3(T^2)\subset
  H_3(T^{2g})$ is trivial, we have $J_*[Z_i]=0$ for
  $i>k$. Finally, on $T_k$ the function $\delta$ is nonconstant;
  however, the images of both $j$ and $\delta$ are contained in the
  2--dimensional subspace determined by $a_k$ and $b_k$. The same is
  thus true of the image of $J$, so $J_*[Z_k]=0$ as well. We conclude
  that
  \[J_*\colon [M_f]\mapsto \sum_{i=1}^g [Z_i]\mapsto
  \sum_{i<k}a_i\wedge b_i\wedge a_k=\tauJ(f),\] as desired.
\end{proof}

Andy Putman has pointed out that one can view the idea of this proof as ``moving the cycle represented by $M_f$ to the boundary of Torelli space'', where the computation is easier to verify; from this viewpoint, moving to the boundary of Torelli space corresponds to the degeneration of $S_g$ to the union-of-tori $Y=\bigcup T_i$.

\section{Tools for computing $\tau_i$}\label{sec:tools}
In this section we provide two of our main tools for computing $\tau_i$,
and we use them to compute $\tau_i$ on abelian cycles.  It will be
convenient for us to state our results for the case of surfaces with
boundary, namely for the map $\widehat{\tau_i}$ mentioned in the
introduction.  For simplicity of notation we will call this map
$\tau_i$ as well.

\para{Product with a bounding pair map}
Our first proposition gives a method to bootstrap up homology classes
which can be detected using $\tau_i$.  Let $S_{g,1}\hookrightarrow
S_{g+1,1}$ be the standard inclusion, inducing an inclusion
$\I_{g,1}\hookrightarrow \I_{g+1,1}$.  Let $f=T_\alpha T_\beta^{-1}$
be a bounding pair map supported in the complement of $S_{g,1}$, and
let $a$ be the common homology class of $\alpha$ and $\beta$ (oriented with $S_{g,1}$ on the left).  Then we
have a natural map \[\cdot\times f\colon H_i(\I_{g,1})\to
H_{i+1}(\I_{g+1,1})\] given by the Gysin homomorphism
$H_i(\I_{g,1})\to H_{i+1}(\I_{g,1}\times \langle f\rangle)$ followed
by the inclusion $\I_{g,1}\times \langle f\rangle\to \I_{g+1,1}$.

\begin{proposition}\label{prop:crossTstable}
  Let $f$ be as above. For any $\sigma\in H_i(\I_{g,1})$ we have \[\tau_{i+1}(\sigma\times
  f)=\tau_i(\sigma)\wedge a.\]
\end{proposition}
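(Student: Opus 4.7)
My plan is to follow the framework of Section~\ref{section:examples}: represent $\sigma$ by a bundle $S_{g,1}\to E\to B$ with $[B]\in H_i(B)$ (taking $B$ to be a closed oriented manifold), and build a bundle $S_{g+1,1}\to E'\to B\times S^1$ representing $\sigma\times f$. Explicitly, $E'$ is obtained by gluing, over $B\times S^1$, the trivial $S^1$-extension $E\times S^1$ (with fiber $S_{g,1}$) to $B\times M_f'$ (with fiber $S_{1,2}$), where $M_f'\to S^1$ is the mapping torus of the restriction of $f$ to the complementary subsurface $S_{1,2}=S_{g+1,1}\setminus\mathrm{int}(S_{g,1})$. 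Since $f$ is the identity on $\partial S_{1,2}$, the two sub-bundles share the common boundary sub-bundle $B\times S^1\times S^1$, and the gluing is trivial. The task then reduces to computing $(J_{E'})_*[E']$ for a suitable parametrized Abel--Jacobi map $J_{E'}\colon E'\to T^{2g+2}$.

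I would construct $J_{E'}$ piecewise. Extend the given symplectic basis $\{a_1,b_1,\ldots,a_g,b_g\}$ of $H_1(S_{g,1})$ to one of $H_1(S_{g+1,1})$ by setting $a_{g+1}=a$ and adding a symplectic dual $b_{g+1}$. Following the model of Proposition~\ref{prop:tau1}, on the $E\times S^1$ piece I set
\[
J_{E'}(b,p,t)=(J_E(b,p),\,t,\,0)\in T^{2g}\times T^2=T^{2g+2},
\]
so that $J_{E'}$ agrees with $J_E$ in the first $2g$ coordinates and wraps once around the $a_{g+1}$-circle as $t\in S^1$ varies. On the $B\times M_f'$ piece I set, independent of $b$,
\[
J_{E'}(b,[p,t])=j(p)+t\cdot\delta(p),
\]
where $j\colon S_{1,2}\to T^{2g+2}$ is an Abel--Jacobi map factoring through the quotient $S_{1,2}\to T^2$ onto the $(a_{g+1},b_{g+1})$-subtorus, and $\delta$ is the lift of $j\circ f-j$ to $\R^{2g+2}$, chosen as in Proposition~\ref{prop:tau1} so that its only nonzero coordinate is $a_{g+1}$, constantly $1$ on the $S_{g,1}$-side of the bounding pair and $0$ on the opposite side. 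The two formulas agree on the shared boundary sub-bundle, where $j(p)=0$ (as $\partial S_{g,1}$ is null-homologous in $S_{g+1,1}$) and $\delta(p)$ has $a_{g+1}$-coordinate $1$, so $J_{E'}$ is well-defined.

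To finish I apply $(J_{E'})_*$ to the decomposition $[E']=[E\times S^1]+[B\times M_f']$, whose boundary chains cancel along the shared sub-bundle. On $E\times S^1$ the map factors as the product $J_E\times\iota$ with $\iota\colon S^1\to T^{2g+2}$ the degree-one map onto the $a_{g+1}$-circle, contributing
\[
(J_{E'})_*[E\times S^1]=(J_E)_*[E]\wedge a_{g+1}=\tau_i(\sigma)\wedge a.
\]
On $B\times M_f'$ the map is independent of $b$ and its image lies in the 2-torus $T^2\subset T^{2g+2}$ on coordinates $(a_{g+1},b_{g+1})$: for $\dim B\geq 1$ the pushforward already vanishes because $[B]$ dies in the positive-degree homology of a point, while for $\dim B=0$ it vanishes because $H_3(T^2)=0$. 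Either way this term contributes zero, and summing gives $\tau_{i+1}(\sigma\times f)=\tau_i(\sigma)\wedge a$.

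The only real obstacle is the careful setup of $J_{E'}$: one must arrange the lift $\delta$ to take the nonzero constant value $a_{g+1}$ on the entire $S_{g,1}$-side of the bounding pair, which is precisely what generates the wedge factor $a_{g+1}=a$ in the final answer. This is a direct parametrization of the $\delta$-propagation argument in Proposition~\ref{prop:tau1}, and I expect no new difficulties beyond that.
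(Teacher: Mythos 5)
Your proposal is correct and follows essentially the same route as the paper: decompose the total space into the sub-bundle $E\times S^1$ over which $f$ acts trivially and the piece $B\times M_f$ carrying the bounding pair, define the parametrized Abel--Jacobi map piecewise with $\delta$ normalized to equal $a$ on the $S_{g,1}$-side, and observe that the first piece contributes $\tau_i(\sigma)\wedge a$ while the second lands in a $2$--torus and dies. The only cosmetic difference is that the paper first collapses each fiber to the wedge $S_g\vee S_{1,*}$ so the two pieces become honest closed products before splitting the fundamental class, whereas you split the chain directly along the shared boundary sub-bundle (where you should also note that $J_E$ itself is normalized to vanish, not just $j$); this changes nothing substantive.
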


Note that Proposition~\ref{prop:tau1} can be deduced from Proposition~\ref{prop:tau0} by applying Proposition~\ref{prop:crossTstable}.

\begin{proof}
  Let $S_{g,1}\to E'\to B$ be a bundle with a homology class $[B]\in
  H_i(B)$ representing $\sigma\in H_i(\I_{g,1})$.  There is an associated bundle $S_{g,*}\to
  E\to B$ representing $\sigma\in H_i(\I_{g,*})$. Recall that
  by \eqref{eq:computetaui}, $\tau_i(\sigma)$ is the image of $[E]$ under the
  parametrized Abel--Jacobi map $J_E\colon E\to T^{2g}$.
  
  Similarly, there is a bundle $S_{g+1,*}\to \Ebar\to B\times
  S^1$ representing \[[B]\times [S^1]\mapsto \sigma\times f\in
  H_{i+1}(\I_{g+1,*}).\] Here $[B]\times [S^1]\in H_{i+1}(B\times S^1)$
  is the class corresponding to $[B]\otimes [S^1]\in H_i(B)\otimes
  H_1(S^1)$ under the K\"unneth formula; the preimage of $[B]\times
  [S^1]$ is a class denoted $[\Ebar]\in H_{i+3}(\Ebar)$.

  To compute $\tau_{i+1}(\sigma \times f)$, we need to explicitly
  describe the space $\Ebar$. By $S_{1,1,*}$ we mean a surface of
  genus 1 with one boundary component and a separate marked point.  We can
  glue $E'$ to the trivial bundle $S_{1,1,*}\times B$ fiberwise along 
  their common boundary component $S^1\times B$.  Now
  let \[\Ebar=(E'\cup (S_{1,1,*}\times B))\times [0,1]/\sim,\] where
  the identification is
  given by: \begin{align*}(e,0)&\sim (e,1)&\text{ for }e&\in E'\\
    \big((f(p),b),0\big)&\sim\big((p,b),1\big)&\text{ for }(p,b)&\in
    S_{1,1,*}\times B\end{align*} 
    
  Note that $\Ebar$ naturally has the
  structure of a bundle \[S_{g+1,*}\to \Ebar\to B\times S^1.\] Over
  $B\subset B\times S^1$, this bundle restricts to \[S_{g+1,*}\to
  E'\cup (S_{1,1,*}\times B)\to B,\] which represents 
  $[B]\mapsto\iota(\sigma)\in H_i(\I_{g+1,*})$. Over $S^1\subset
  B\times S^1$, it restricts to the mapping torus 
  $S_{g+1,*}\to M_f\to S^1$ of $f$, which represents $[S^1]\mapsto
  [f]\in H_1(\I_{g+1,*})$. It follows that $\Ebar\to B\times S^1$ represents
  $[B]\times [S^1]\mapsto \sigma\times f\in H_{i+1}(\I_{g+1,*})$, as
  desired.

  Now we construct the parametrized Abel--Jacobi map $J_{\Ebar}\colon \Ebar\to
  T^{2g+2}$.
  The quotient $S_{g+1,*}\to S_g\vee S_{1,*}$ induces a quotient
  $\Ebar\to Z$, where $Z$ is a bundle $S_g\vee S_{1,*}\to Z\to B\times
  S^1$. Note that $Z$ is the union of two subspaces: the first a
  bundle $S_g\to Z_1\to B\times S^1$ and the second a bundle
  $S_{1,*}\to Z_2\to B\times S^1$, meeting in a codimension 2 subspace
  homeomorphic to $B\times S^1$. By examination, we see that $Z_1$ is
  in fact simply $S_g\to E\times S^1\to B\times S^1$, and that $Z_2$
  is simply $S_{1,*}\to B\times M_f\to B\times S_1$. In particular,
  the quotient $\Ebar\to Z$ maps
  \[[\Ebar]\quad \mapsto\quad [E]\times [S^1]\ +\ [B]\times [M_f]\quad
  \in H_{i+3}(Z).\]

  We will define $J_{\Ebar}$ by defining it on the pieces $E\times
  S^1$ and $M_f\times B$ of the quotient space $Z$. Let $J_E\colon
  E\to T^{2g}$ be a parametrized Abel--Jacobi map for $E$. Let $j\colon S_{1,*}\to
  T^2$ be an Abel--Jacobi map, and as above let $\delta\colon S_{1,*}\to
  \R^2$ be the map defined by the conditions that $\delta(\ast)=0$ and
  $j\circ f-j=\delta \bmod{\Z^2}$. Assume that we have chosen a basis
  for $H_1(S_{g+1},*)$ so that $a=a_{g+1}$. We define the parametrized Abel--Jacobi
  map $J_{\Ebar}\colon Z\to T^{2g+2}=T^{2g}\times T^2$ by
  \begin{align*}
    (e,t)&\mapsto\big(J_E(e),\ (t,0)\big)
    &\text{for }(e,t)&\in E\times S^1\\
    (b,(p,t))&\mapsto\big(0,\ j(p)+t\delta(p)\big) &\text{for
    }(b,(p,t))&\in B\times M_f
  \end{align*} On the intersection $(E\times S^1) \cap (B\times M_f)$ we have
  $J_E(e)=0$, while $j(p)=0$ and $\delta(p)=(1,0)$ (this can be checked as in the proof of
  Proposition~\ref{prop:tau1}); thus the resulting map $J_{\Ebar}$ is
  well-defined.  To see that $J_{\Ebar}$ is a parametrized Abel--Jacobi map, we
  consider the restriction to a fiber and to the section. On the
  section, which is contained in $B\times M_f$, we have
  \[(b,(\ast,t))\mapsto (0,j_1(\ast)+t\delta(\ast),j_2(\ast))=0\] as
  desired. Restricted to a fiber $S_{g+1,*}$ of $\Ebar$, the map $J_{\Ebar}$
  factors through $S_g\vee S_{1,*}$. On the first component the map
  is $(J_E,0,0)$, which induces the abelianization; on the second 
  component we have $(0,J_{M_f})$, which does the same. Thus
  $J_{\Ebar}\colon \Ebar\to T^{2g+2}$ is the desired parametrized Abel--Jacobi
  map.

  It remains to compute $(J_{\Ebar})_*([E]\times [S^1])$ and
  $(J_{\Ebar})_*([B]\times [M_f])$. The restriction of $J_{\Ebar}$ to
  $E\times S^1$ is of the form $J_E\times (t,0)$; it is then immediate
  that \[(J_{\Ebar})_*([E]\times [S^1])= (J_E)_*([E])\wedge a.\] The
  image of $J_{\Ebar}$ restricted to $B\times M_f$ is contained in the
  2--dimensional subtorus determined by the last two coordinates, and
  is thus trivial in $H_{i+3}(T^{2g+2})$. It follows
  that \[\tau_{i+1}(\sigma\times
  f)=(J_{\Ebar})_*[\Ebar]=(J_E)_*[E]\wedge
  a+0=\tau_i(\sigma)\wedge a\] as desired.
\end{proof}

\para{The pair-of-pants product} Our second kind of computation of
$\tau_i$ is a vanishing result.  To state it in a general form, we make
the following definition.  There is a natural inclusion $S_{g,1}\sqcup S_{h,1} \to S_{g+h,1}$ defined by gluing two surfaces $S_{g,1}$ and $S_{h,1}$  to a
pair-of-pants $S_{0,3}$ along their boundary components, producing a
surface homeomorphic to $S_{g+h,1}$, as depicted in Figure~\figpants.
  \begin{figure}[h]
    \label{fig:pants}
    \centering
      \includegraphics[width=120mm]{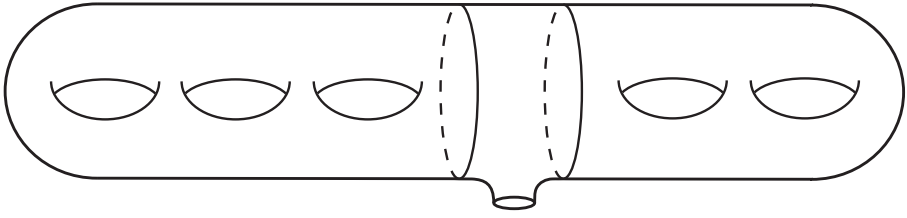}
    \caption{Gluing $S_{3,1}$ to $S_{2,1}$ to produce $S_{5,1}$.}
  \end{figure}

This inclusion
induces a map
\begin{equation}\label{eq:TorelliKuenneth}
\I_{g,1}\times \I_{h,1}\to \I_{g+h,1}
\end{equation}
The \emph{pair-of-pants product} \[H_i(\I_{g,1})\times H_j(\I_{h,1}) \to H_{i+j}(\I_{g+h,1})\] is the map obtained by composing the K\"unneth
map $H_i(\I_{g,1})\times H_j(\I_{h,1})\to H_{i+j}(\I_{g,1}\times
\I_{h,1})$ with the map on homology induced by \eqref{eq:TorelliKuenneth}.
Given $\sigma\in
H_i(\I_{g,1})$ and $\eta\in H_j(\I_{h,1})$, we denote their pair-of-pants product by
$\sigma\times\eta\in H_{i+j}(\I_{g+h,1})$.
\begin{proposition}\label{prop:pantsproducttrivial}
  Given $\sigma\in H_i(\I_{g,1})$ and $\eta\in H_j(\I_{h,1})$ with
  $i,j\geq 1$, we have\linebreak $\tau_{i+j}(\sigma\times \eta)=0$.
\end{proposition}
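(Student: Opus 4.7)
The plan is to follow the template from the proof of Proposition~\ref{prop:tau1}: represent $\sigma\times\eta$ by an explicit bundle $S_{g+h,*}\to \overline{E}\to B\times C$, build a parametrized Abel--Jacobi map $J_{\overline{E}}$ adapted to the pair-of-pants decomposition, and then read off that $(J_{\overline{E}})_*[\overline{E}]=0$ by a direct dimension-counting argument.

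First I would build $\overline{E}$. Let $S_{g,1}\to E\to B$ and $S_{h,1}\to F\to C$ be closed-manifold bundles representing $\sigma$ and $\eta$. Writing $P\subset S_{g+h,1}$ for the pair-of-pants joining the two subsurfaces, set
\[\overline{E}\coloneq (E\times C)\cup(P\times B\times C)\cup(B\times F),\]
where adjacent pieces are glued along their common fiberwise boundary circles; after collapsing the remaining outer boundary of $P$ to the section, this is an $S_{g+h,*}$--bundle over $B\times C$ representing $\sigma\times\eta$, and $[\overline{E}]$ is the Gysin preimage of $[B]\times[C]$.

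Next I would build $J_{\overline{E}}$ to factor through the wedge $T^{2g}\vee T^{2h}\subset T^{2g}\times T^{2h}=T^{2g+2h}$. Choose parametrized Abel--Jacobi maps $J_E\colon E\to T^{2g}$ and $J_F\colon F\to T^{2h}$. Since $[\partial S_{g,1}]=0\in H_1(S_{g,1})$ (and likewise for $S_{h,1}$), and since $\I_{g,1}$ fixes $\partial S_{g,1}$ pointwise so that $\partial E\cong B\times S^1$ is a trivial subbundle, I can arrange $J_E$ to be constant at $0$ on $\partial E$, and likewise for $J_F$. Define $J_{\overline{E}}$ piecewise by
\[(e,c)\mapsto(J_E(e),0),\qquad (p,b,c)\mapsto(0,0),\qquad (b,f)\mapsto(0,J_F(f)),\]
which agree on overlaps by construction. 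On each fiber $S_{g+h,*}=S_{g,1}\cup P\cup S_{h,1}$ the induced map is the abelianization (the $P$-piece contributes trivially because all three boundary loops of $P$ are null-homologous in $S_{g+h,1}$, so $\pi_1(P)\to H_1(S_{g+h,1})$ is zero), and $J_{\overline{E}}$ is constant on the section, so it is a valid parametrized Abel--Jacobi map.

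Finally I would compute the pushforward. Since $J_{\overline{E}}$ factors through the wedge, $(J_{\overline{E}})_*[\overline{E}]\in H_{i+j+2}(T^{2g})\oplus H_{i+j+2}(T^{2h})$, and I will show each summand vanishes. For the $T^{2g}$ component, post-composition with the projection $T^{2g+2h}\to T^{2g}$ gives a map that vanishes off $E\times C$ and equals $J_E\circ\pi_E$ on $E\times C$, so it descends to the quotient $\overline{E}\to(E\times C)/(\partial E\times C)$ taking $[\overline{E}]$ to the relative product $[E,\partial E]\times [C]$; since the induced map further factors through the projection $\pi_E\colon E\times C\to E$, the computation reduces to $(\pi_E)_*([E,\partial E]\times [C])\in H_{i+j+2}(E,\partial E)$, which vanishes because $\dim E=i+2<i+j+2$ (using $j\geq 1$). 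Symmetrically the $T^{2h}$ component vanishes using $i\geq 1$, yielding $\tau_{i+j}(\sigma\times\eta)=0$. The main subtlety --- arranging $J_E$ and $J_F$ to be constant on the boundary subbundles --- is the hinge of the argument and is enabled precisely by the null-homologous property of $\partial S_{g,1}$ and $\partial S_{h,1}$.
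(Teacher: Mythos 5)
Your proof is correct and follows essentially the same route as the paper's: the paper collapses the pair-of-pants fiberwise to get a quotient bundle with fiber $S_g\vee S_h$, defines the parametrized Abel--Jacobi map as $J_{E_\sigma}\times 0$ on one piece and $0\times J_{E_\eta}$ on the other, and kills each summand of $[\Ebar]$ by the same dimension count ($\dim E_\sigma = i+2 < i+j+2$ since $j\geq 1$, and symmetrically). Your extra care in homotoping $J_E$ to be constant on $\partial E$ is exactly what the paper's quotient to the wedge accomplishes implicitly.
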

\begin{proof}
  Let $S_{g,1}\to E'_{\sigma}\to B_\sigma$ represent
  $[B_\sigma]\mapsto \sigma\in H_i(\I_{g,1})$, with $S_{g,*}\to
  E_{\sigma}\to B_\sigma$ representing $[B_\sigma]\mapsto \sigma\in
  H_i(\I_{g,*})$; similarly define $S_{g,1}\to E'_{\eta}\to B_\eta$
  and\linebreak $S_{g,*}\to E_{\eta}\to B_\eta$. Let $[E_\sigma]\in
  H_{i+2}(E_\sigma)$ be the preimage of $[B_\sigma]$ in $E_\sigma$,
  and similarly for $[E_\eta]\in H_{j+2}(E_\eta)$. Let the bundle
  $S_{g+h,*}\to \Ebar\to B_\sigma\times B_\eta$ represent
  $[B_\sigma]\times [B_\eta]\mapsto \sigma\times \eta\in
  H_{i+j}(\I_{g+h,*})$. The restriction of
  $\Ebar$ to $B_\sigma$ is the bundle obtained by identifying
  $S_{g,1}\to E'_\sigma\to B_\sigma$ with the trivial bundle
  $S_{h,1,*}\to S_{h,1,*}\times B_\sigma\to B_\sigma$ along their
  mutual boundary component $S^1\times B_\sigma$; a similar
  observation applies to the restriction to $B_\eta$.

  The quotient $S_{g+h,*}\to S_g\vee S_h$ induces a quotient $\Ebar\to
  Z$, where $Z$ is a bundle $S_g\vee S_h\to Z\to B_\sigma\times
  B_\eta$. The point where the two surfaces intersect gives a
  basepoint for $S_g\vee S_h$, and taking this point in each fiber
  yields a section of $Z$. Note that $Z$ is the union of two
  subspaces: the first a bundle $S_g\to Z_1\to B_\sigma\times
  B_\eta$, and the second a bundle $S_h\to Z_2\to B_\sigma\times
  B_\eta$. By inspection, we see that $Z_1$ is just $S_g\to
  E_\sigma\times B_\eta\to B_\sigma\times B_\eta$, and similarly $Z_2$
  is $S_h\to B_\sigma\times E_\eta\to B_\sigma\times B_\eta$. The
  quotient $\Ebar\to Z$ maps \[[\Ebar]\quad \mapsto\quad [E_\sigma]\times
  [B_\eta]\ +\ [B_\sigma]\times [E_\eta]\quad\in H_{i+j+2}(Z).\]

  The parametrized Abel--Jacobi map $J_{\Ebar}\colon Z\to T^{2g+2h}$ can be defined on
  $E_\sigma\times B_\eta$ by $J_{E_\sigma}\times 0$, and on
  $B_\sigma\times E_\eta$ by $0\times J_{E_\eta}$. It is easy to check
  that this is well-defined, and that it induces the appropriate map
  on fundamental group. From this formula, we see that the image under
  $J_{\Ebar}$ of the first piece $E_\sigma\times B_\eta$ is contained
  in the image of $J_{E_\sigma}$, which has dimension at most
  $i+2$. Thus $[E_\sigma]\times [B_\eta]\in H_{i+j+2}(E_\sigma\times
  B_\eta)$ is mapped to zero in $H_{i+j+2}(T^{2g+2h})$. The same
  applies to the second piece $B_\sigma\times E_\eta$, and so we
  have \[\tau_{i+j}(\sigma\times
  \eta)=(J_{\Ebar})_*[\Ebar]=(J_{\Ebar})_*([E_\sigma]\times
  [B_\eta])+(J_{\Ebar})_*([B_\sigma]\times [E_\eta])=0+0=0\] as
  desired.
\end{proof}

\para{Abelian cycles}
A collection of commuting elements $f_1,\ldots,f_d$ of a group $G$
induces a map $\Z^d\to G$; we denote the image of the fundamental
class $[\Z^d]\in H_d(\Z^d,\Q)$ in $H_d(G,\Q)$ by $\{f_1,\ldots,f_d\}$.
This is called an \emph{abelian cycle} in $H_d(G,\Q)$.  
Proposition~\ref{prop:crossTstable} and
Proposition~\ref{prop:pantsproducttrivial} can be used to compute
$\tau_i$ on certain abelian cycles. 

\begin{definition}\label{def:trulynested}
  Let $f_1,\ldots,f_k$ be a collection of bounding pair maps on
  $S_{g,*}$, with $f_i$ being the twist about $\alpha_i$ composed with
  the inverse twist about $\beta_i$.  Recall that
  the curves $\alpha_i,\beta_i$ are assumed to be nonseparating. We say that
  this collection is \emph{truly nested} if \begin{enumerate} \item
    the curves $\alpha_i$ are pairwise non-homologous, and \item after
    possibly re-ordering $\{f_i\}$, the union $\alpha_j\cup\beta_j$
    separates the basepoint from $\alpha_i\cup \beta_i$ whenever
    $i<j$.
\end{enumerate}
\end{definition}

An easy induction shows that these conditions force the curves
$\alpha_i$ and $\beta_i$ to be in one of the ``standard
configurations'', a representative example of which is given in
Figure~\figexamples{}a. Note that, by this definition, a single bounding
pair map is truly nested. For further examples, the collections
depicted in Figures~\figshriek{}a, \figabeliansurj, \figshrieksurj,
and \figdetect{} are truly nested, while the collection depicted in
Figure~\figring{} is not. We assume that any truly nested collection has been reordered so that the second condition above holds.
  \begin{figure}[h]
    \label{fig:examples}
    \centering
      \includegraphics[width=150mm]{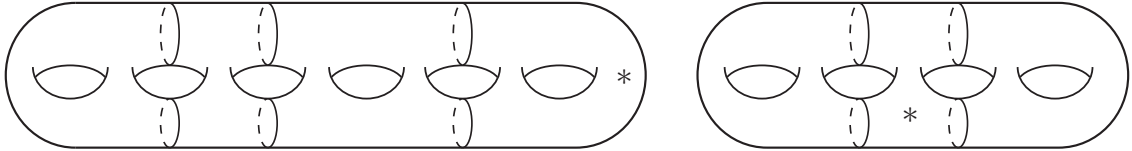}
    \caption{The first collection is truly nested; the second
      collection is not.}
  \end{figure}

  For any bounding pair $f_i$, let $c_i$ be the common homology class
  of $\alpha_i$ and $\beta_i$.  If a collection is truly nested (and
  ordered as above), then consider the ``farthest'' subsurface cut
  off, namely the component of $S_{g,*}\setminus
  (\alpha_1\cup\beta_1)$ not containing the basepoint. Choose $S_0$ to be
  a surface with one boundary component, contained in the ``farthest''
  subsurface, of maximal genus. Let $\omega_0\in \bwedge^2
  H_1(S_0)\subset \bwedge^2 H_1(S_{g,*})$ be a symplectic form for
  $H_1(S_0)$.  Note that the subspace $H_1(S_0)$ is not uniquely determined. However, the
  following theorem holds regardless of the choice of $S_0$.

\begin{theorem}\label{thm:nested}
  Any truly nested collection of bounding pair maps determines a
  nonzero abelian cycle.  More precisely, with notation as above, the
  image under $\tau_k$ of the abelian cycle $\{f_1,\ldots,f_k\}$ is
   \[\tau_k(\{f_1,\ldots,f_k\})=\omega_0\wedge c_1\wedge\cdots\wedge c_k.\]
\end{theorem}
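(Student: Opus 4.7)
The plan is to prove the theorem by induction on $k$, with Proposition \ref{prop:tau1} providing the base case and Proposition \ref{prop:crossTstable} driving the inductive step.

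For the base case $k=1$, Proposition \ref{prop:tau1} identifies $\tau_1$ with the Johnson homomorphism $\tauJ$, and Johnson's formula \eqref{eq:Johnson} applied to $f_1 = T_{\alpha_1} T_{\beta_1}^{-1}$ with farthest subsurface of genus $g_0$ gives $\tau_1(f_1) = (a_1 \wedge b_1 + \cdots + a_{g_0} \wedge b_{g_0}) \wedge a_{g_0+1}$, which is $\omega_0 \wedge c_1$ in the notation of the theorem.

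For the inductive step, the key observation is that the truly nested structure allows us to build up the collection one bounding pair at a time, each new one sitting in a ``fresh handle'' outside a growing one-boundary subsurface. Concretely, the separating condition of Definition \ref{def:trulynested} yields a nested sequence $S_0 \subset \Sigma_1 \subset \cdots \subset \Sigma_k \subset S_{g,1}$ of subsurfaces with one boundary component such that $\Sigma_i$ contains $S_0$ and the supports of $f_1,\ldots,f_i$, and each inclusion $\Sigma_{i-1} \hookrightarrow \Sigma_i$ is the standard inclusion of Proposition \ref{prop:crossTstable}, with $f_i$ supported in the complementary handle. Because the $f_i$ pairwise commute, the K\"unneth formula identifies the abelian cycle $\{f_1,\ldots,f_k\}$ with the cross product $\{f_1,\ldots,f_{k-1}\} \times f_k$ appearing in Proposition \ref{prop:crossTstable}. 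Combining the inductive hypothesis with that proposition yields
\[\tau_k(\{f_1,\ldots,f_k\}) = \tau_{k-1}(\{f_1,\ldots,f_{k-1}\}) \wedge c_k = \omega_0 \wedge c_1 \wedge \cdots \wedge c_k,\]
as required.

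Nonvanishing follows by choosing a symplectic basis adapted to a standard model of the truly nested configuration: the classes $c_i = [\alpha_i]$ become distinct basis vectors $a_{j_i}$ whose indices lie strictly above those used for a symplectic basis of $H_1(S_0)$ realizing $\omega_0$, so $\omega_0 \wedge c_1 \wedge \cdots \wedge c_k$ manifestly involves $k+2$ linearly independent basis elements and is nonzero in $\bwedge^{k+2} H$. The part of the argument that I expect will demand the most care is the geometric construction of the nested sequence $\Sigma_1 \subset \cdots \subset \Sigma_k$, together with the verification that each step is a standard one-handle extension to which Proposition \ref{prop:crossTstable} literally applies; this is a topological exercise, but it is the pressure point where the separating condition of Definition \ref{def:trulynested} is genuinely needed in the inductive step.
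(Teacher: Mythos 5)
Your proposal is correct and follows essentially the same route as the paper: an induction driven by Proposition~\ref{prop:crossTstable} along a nested sequence of one-boundary subsurfaces extracted from the truly nested condition (the paper starts the induction at $k=0$ with Proposition~\ref{prop:tau0} giving $\tau_0(1)=\omega_0$, whereas you start at $k=1$ via Proposition~\ref{prop:tau1}, but these base cases are equivalent since the latter is deduced from the former by the same cross-product step). Your explicit remarks on nonvanishing and on the need to verify that each subsurface inclusion is one to which Proposition~\ref{prop:crossTstable} applies correctly identify the points the paper treats implicitly via the ``standard configuration'' of a truly nested collection.
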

\begin{proof}
  Let $S_0$ be as above. For $1\leq i\leq k$, sequentially choose
  $S_i$ to be a subsurface of $S_{g,1}$ with one boundary component
  and maximal genus subject to the condition that $S_i$ contains
  $\alpha_j\cup \beta_j$ for all $j\leq i$, $S_i$ contains $S_{i-1}$,
  and $S_i$ is disjoint from $\alpha_j\cup \beta_j$ for all $j>i$. The
  existence of such subsurfaces $S_i$ follows from the assumption that
  the collection is truly nested. Note that $S_k$ is the whole surface
  $S_{g,1}$.

  Let $1\in H_0(\I(S_0))$ be a generator. By
  Proposition~\ref{prop:tau0}, $\tau_0(1)=\omega_0$. For each $0\leq
  i\leq k$, the abelian cycle $\{f_1,\ldots,f_i\}$ may be considered
  as an element of $H_i(\I(S_i))$. We show by induction that
  $\tau_i(\{f_1,\ldots,f_i\})=\omega\wedge c_1\wedge\cdots\wedge
  c_i$. The abelian cycle $\{f_1,\ldots,f_{i+1}\}$ can be written as
  the cross product $\{f_1,\ldots,f_i\}\times f_{i+1}$ in the sense of
  Proposition~\ref{prop:crossTstable}, followed by the map to
  $H_{i+1}(S_{i+1})$ induced by the inclusion of the subsurface. The
  inductive step follows by applying
  Proposition~\ref{prop:crossTstable}.
\end{proof}

Conversely, we have the following.

\begin{theorem}\label{thm:non-nested}
  If $f_1,\ldots,f_k$ is a collection of commuting bounding pair maps that is
  not truly nested, then
  \[\tau_k(\{f_1,\ldots,f_k\})=0.\]
\end{theorem}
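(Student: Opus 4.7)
The plan is to split according to which clause of Definition~\ref{def:trulynested} fails, and to dispatch each case using one of the two main tools developed in this section. Throughout I will use that commuting bounding pair maps $T_{\alpha_\ell}T_{\beta_\ell}^{-1}$ have pairwise disjoint supports, so the curves $\alpha_1,\beta_1,\ldots,\alpha_k,\beta_k$ can be taken pairwise disjoint, and each $\alpha_\ell\cup\beta_\ell$ separates $S_{g,1}$ into two pieces.

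\emph{Case 1: condition (2) of Definition~\ref{def:trulynested} fails.} Then no reordering makes the collection nested, so there exist indices $i$ and $j$ such that neither $\alpha_i\cup\beta_i$ separates the basepoint from $\alpha_j\cup\beta_j$ nor conversely. Writing $R_i$ (respectively $R_j$) for the component of $S_{g,1}\setminus(\alpha_i\cup\beta_i)$ (respectively $S_{g,1}\setminus(\alpha_j\cup\beta_j)$) not containing the basepoint, this incomparability forces $R_i$ and $R_j$ to be disjoint. I would then construct a separating simple closed curve $\gamma\subset S_{g,1}$, disjoint from all $\alpha_\ell\cup\beta_\ell$, with $R_i$ on one side and the basepoint (together with $R_j$) on the other: choose a short arc in $S_{g,1}\setminus R_i$ running from $\alpha_i$ to $\beta_i$ that avoids $R_j$ and all the other bounding pair curves, band $\alpha_i$ to $\beta_i$ along this arc, and take $\gamma$ to be the resulting simple closed curve. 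This realizes $S_{g,1}$ as a pair-of-pants gluing $S_{g',1}\cup_P S_{g'',1}$, and the collection $\{f_1,\ldots,f_k\}$ partitions into two nonempty subcollections supported on $S_{g',1}$ and $S_{g'',1}$ respectively. The abelian cycle $\{f_1,\ldots,f_k\}$ therefore factors as a pair-of-pants product $\sigma\times\eta$ of abelian cycles of positive degrees $p,q\geq 1$, and Proposition~\ref{prop:pantsproducttrivial} yields $\tau_k(\sigma\times\eta)=0$.

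\emph{Case 2: condition (2) holds but two of the $\alpha_\ell$ are homologous.} Since the collection admits a nested ordering, I can run the proof of Theorem~\ref{thm:nested} verbatim: that proof uses only nestedness to build a chain of subsurfaces $S_0\subset S_1\subset\cdots\subset S_k=S_{g,1}$ with $f_{\ell+1}$ a bounding pair in the complement of $S_\ell$ in $S_{\ell+1}$, and then applies Proposition~\ref{prop:crossTstable} one step at a time. The outcome is the formula $\tau_k(\{f_1,\ldots,f_k\})=\omega_0\wedge c_1\wedge\cdots\wedge c_k$, which vanishes because two of the $c_\ell$ coincide.

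The main point requiring care will be in Case~1: I will need to check that the banding arc from $\alpha_i$ to $\beta_i$ can be chosen disjoint from $R_j$ and from every other $\alpha_\ell\cup\beta_\ell$, and then that $\gamma$ really partitions the $f_\ell$ cleanly into two nonempty groups on opposite sides. The existence of such an arc reduces to connectedness of $S_{g,1}\setminus\bigl(R_i\cup R_j\cup\bigcup_\ell(\alpha_\ell\cup\beta_\ell)\bigr)$, which follows from the incomparability of $f_i$ and $f_j$ together with the disjointness of all the curves; and once $\gamma$ is disjoint from every $\alpha_\ell\cup\beta_\ell$, each such pair lies entirely in one component of $S_{g,1}\setminus\gamma$, so the partition is automatic.
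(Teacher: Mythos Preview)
Your case split is natural, and Case~2 is correct and in fact cleaner than the paper's treatment: the inductive proof of Theorem~\ref{thm:nested} uses only condition~(2) to build the chain of subsurfaces, so when condition~(2) holds the formula $\omega_0\wedge c_1\wedge\cdots\wedge c_k$ is valid and vanishes once two of the $c_\ell$ agree.

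The gap is in Case~1. Your claim that incomparability of $f_i$ and $f_j$ forces $R_i\cap R_j=\emptyset$ is false. Suppose $\alpha_j\subset R_i$ while $\beta_j\subset R_i^c$. Since $\alpha_j\cup\beta_j$ separates $S_g$, one checks that $\alpha_j$ must separate $\alpha_i$ from $\beta_i$ inside $R_i$, and likewise $\beta_j$ separates them inside $R_i^c$; the resulting four complementary regions interlock so that $f_i$ and $f_j$ are incomparable yet $R_i$ and $R_j$ share a region. In this configuration \emph{no} separating curve disjoint from the four curves can put $\{\alpha_i,\beta_i\}$ and $\{\alpha_j,\beta_j\}$ on opposite sides, so the pair-of-pants reduction is simply unavailable. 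Note that this situation forces $[\alpha_i]=\pm[\alpha_j]$, so condition~(1) also fails---but you are in your Case~1, and your Case~2 argument requires condition~(2) to hold, so neither case covers it.

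The paper meets the same obstruction and resolves it by reversing the order of the cases: it first proves vanishing whenever the classes $c_1,\ldots,c_k$ are linearly dependent, via a direct construction of an Abel--Jacobi map whose image lands in a complex of dimension at most $m+2<k+2$. Only afterwards does it attempt the pair-of-pants decomposition; when that is blocked (precisely in the interlocked situation above), it falls back on the linear-dependence case already in hand. Your argument can be repaired by supplying such a fallback, but as written it leaves the interlocked configurations uncovered.
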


\begin{proof}
  A collection which is not truly nested must fail either the first or
  second condition in Definition~\ref{def:trulynested}.
  
  \para{Case I} We prove the following (\textit{a priori} stronger) claim: if
  the homology classes of the curves $\alpha_1,\ldots,\alpha_k$ are
  not linearly independent, then $\tau_i(\{f_1,\ldots,f_k\})=0$. Let
  $m$ be the rank of the span of the homology classes
  $c_1,\ldots,c_k$, and let $\gamma_1,\ldots,\gamma_{2k}$ be the
  $\alpha_i$ and $\beta_i$, ordered arbitrarily. We will prove below
  that it is possible to choose curves $\delta_1,\ldots,\delta_{2g}$
  with the following properties:
  \begin{enumerate}
  \item their homology classes
  $d_1,\ldots,d_{2g}$ are a symplectic basis for $H_1(S_g)$, so that
  $(d_i,d_{g+i})=1$ for $1\leq i\leq g$;
  \item for $i\leq m$ each curve
  $\delta_i$ is one of the $\gamma_j$;
  \item the span of $\langle
  d_1,\ldots,d_m\rangle$ is the span of $\langle
  c_1,\ldots,c_k\rangle$;
  \item the curve $\delta_i$ is disjoint from
  all the curves $\gamma_j$ for all $i$ except $g+1\leq i\leq g+m$.
  \end{enumerate}

  Given such a collection $\delta_1,\ldots,\delta_{2g}$, we construct
  an Abel--Jacobi map $j\colon (S_g,\ast)\to (T^{2g},0)$ supported on a
  neighborhood of the union $\delta_1\cup\cdots\cup\delta_{2g}$. One
  way to do this is to choose $1$--forms $\theta_i$ dual to $\delta_i$
  and supported in a small neighborhood. Then $j$ is defined by:
  \[j(p)=\left(\int_\ast^p \theta_{g+1}\,,\ \ldots\,,\ \int_\ast^p
    \theta_{2g}\,,\,\int_\ast^p -\theta_1\,,\ \ldots\,,\ \int_\ast^p
    -\theta_{g}\right)\] By transversality, we may assume that the
  curves $\delta_i$ intersect at most pairwise; it follows that the
  image $j(S_g)$ is contained in the $2$--skeleton of $T^{2g}$.

  Consider the bundle $S_{g,*}\to E\to T^k$ representing
  $\{f_1,\ldots,f_k\}\in H_k(\I_{g,*})$. As in the proof of
  Proposition~\ref{prop:tau1}, we may use the map $j$ to
  construct a parametrized Abel--Jacobi map $J_E\colon E\to T^{2g}$. The
  disjointness properties of $\delta_i$ imply that for each $\ell$,
  $j\circ f_\ell-j$ is nonzero only in the components determined by
  $d_1,\ldots,d_m$. It follows from the construction of $J$ that the
  image $J_E(E)$ is contained in the finite union of the tori (of
  dimension at most $m+2$) determined by the components
  $d_1,\ldots,d_m$ together with at most two other basis elements
  $d_i$ and $d_j$. This subcomplex of $T^{2g}$ has dimension $m+2$;
  since $m<k$, it follows that $(J_E)_*[E]=0$ in $H_{k+2}(T^{2g})$.

  We now show how to find such a collection $\delta_i$. We first
  find $\delta_{m+1},\ldots,\delta_{g}$ and
  $\delta_{g+m+1},\ldots,\delta_{2g}$ as follows. Consider again the
  complement $S_{g,1}\setminus\bigcup \gamma_i$. Each component of the
  complement is a surface of some genus $g_j\geq 0$; we may easily
  find $g_j$ pairs of curves on this subsurface, each pair
  intersecting in one point, and whose homology classes are a
  symplectic basis for the subspace they span. The claim is that doing
  so on each complementary subsurface yields $g-m$ such pairs. By
  collapsing to a point the genus 1 subsurface which is a regular neighborhood
  of such a pair, we may assume that each complementary subsurface has
  genus 0; to prove the claim, we need to prove that $m=g$ under this
  assumption. Consider the functionals $H_1(S_{g,1})\to \Q$ given by
  intersection with each of the $\gamma_i$. The space of functionals
  spanned by this collection has rank $m$. But if the complementary
  components have genus 0, their homology is spanned by the homology
  of their boundary components. Then Mayer--Vietoris implies that the
  mutual kernel of all these functionals is generated by the boundary
  components $\gamma_i$, and thus has rank $m$. We conclude that
  $H_1(S_{g,1})$ has rank $2m$; this verifies the claim, and so we
  have $g-m$ pairs of curves, which we take as
  $\delta_{m+1},\ldots,\delta_{g}$ and
  $\delta_{g+m+1},\ldots,\delta_{2g}$. At this point it is easy to
  choose $\delta_1,\ldots,\delta_m$ and
  $\delta_g,\ldots,\delta_{g+m}$. For the former, we choose any $m$
  curves from the $\gamma_j$ whose homology classes are linearly
  independent to be $\delta_1,\ldots,\delta_m$. Now the only condition
  on the remaining curves is that their homology classes should make
  $d_1,\ldots,d_{2g}$ a symplectic basis, so we may choose
  $\delta_{g+m+1},\ldots,\delta_{2g}$ arbitrarily subject to this
  condition. This completes the proof in the first case.

  \para{Case II} Now consider the case when the second condition is violated. We explain first
  the case when no bounding pair separates the basepoint from the
  others. Consider the component $C$ of $S_{g,1}\setminus \bigcup
  \gamma_i$ which is adjacent to the boundary component. The boundary
  of $C$ consists of curves $\alpha_i$ or $\beta_j$ (plus $\partial
  S_{g,1})$, and under our assumptions it contains curves from at
  least two bounding pairs. There must be some bounding pair $f_i$ so
  that $C$ contains both $\alpha_i$ and $\beta_i$; otherwise, without
  loss of generality the boundary of $C$ would consist of
  $\alpha_1,\ldots,\alpha_j$ for some $j$, plus $\partial
  S_{g,1}$. But then the homology classes of these curves would be
  linearly dependent, and this case has already been dealt with. Thus
  $C$ contains both $\alpha_i$ and $\beta_i$ for some $i$, and so
  there is a separating curve $\gamma$ in $C$ cutting off exactly
  $\alpha_i$ and $\beta_i$. Extend this arbitrarily to a pair-of-pants
  $S_{0,3}$ contained in $C$ having both $\gamma$ and $\partial
  S_{g,1}$ as boundary components.

  Note that $S_{g,1}\setminus S_{0,3}$ has two components $S_{h,1}$
  and $S_{g-h,1}$, each of which contains at least one bounding pair.
  Relabeling, we may assume that $f_1,\ldots,f_j$ are contained in
  $S_{h,1}$ and $f_{j+1},\ldots,f_k$ are contained in $S_{g-h,1}$ for
  $0<j<k$. Then the abelian cycle $\{f_1,\ldots,f_k\}\in
  H_k(\I_{g,1})$ is obtained as the pair-of-pants product of
  $\{f_1,\ldots,f_j\}\in H_j(\I_{h,1})$ and $\{f_{j+1},\ldots,f_k\}\in
  H_{k-j}(\I_{g-h,1})$. Applying Proposition~\ref{prop:crossTstable}, we
  conclude that $\tau_k(\{f_1,\ldots,f_k\})=0$.

  In general such a configuration will be present, but not necessarily
  adjacent to the basepoint. We attempt to order the bounding pairs
  inductively as $f_k$, $f_{k-1}$, etc., so that for each $i$ the
  union $\alpha_i\cup\beta_i$ separates the basepoint from all
  bounding pairs not yet labeled. Since the collection is not truly
  nested, at some point we cannot continue this process; we are left
  with some subset $\{f_1,\ldots,f_\ell\}$ which cannot be so
  ordered. Let $S_\ell$ be a subsurface with one boundary component
  and maximal genus subject to the condition that $S_\ell$ contains
  $\alpha_i\cup \beta_i$ if $i\leq \ell$ and is disjoint from
  $\alpha_i\cup \beta_i$ for $i>\ell$. Then $\{f_1,\ldots,f_\ell\}\in
  H_\ell(\I(S_\ell))$ is as discussed in the previous two paragraphs,
  and so $\tau_\ell(\{f_1,\ldots,f_\ell\})=0$. Now just as in the
  proof of Theorem~\ref{thm:nested}, we may filter $S_{g,1}$ by nested
  subsurfaces $S_i$ for $\ell\leq i\leq k$ with $S_i$ containing
  $\alpha_j\cup\beta_j$ iff $j\leq i$. As before,
  $\{f_1,\ldots,f_{i+1}\}$ is the cross product
  $\{f_1,\ldots,f_i\}\times f_{i+1}$, so applying
  Proposition~\ref{prop:crossTstable}, we have by induction
  \[\tau_{i+1}(\{f_1,\ldots,f_{i+1}\})=\tau_i(\{f_1,\ldots,f_i\})\wedge
  c_i=0\wedge c_i=0. \qedhere\]
\end{proof}

\para{Separating twists} We can try to generalize these techniques
beyond bounding pair maps.  In general, given $f\in \I_{g,\ast}$ and
$\sigma\in H_i(\I_{g,\ast})$, we cannot form $\sigma\times f\in
H_{i+1}(\I_{g,\ast})$. However, consider the inclusion of the
centralizer $C_{\I}(f)$ into $\I_{g,\ast}$; if $\sigma$ is represented
by some $\widetilde{\sigma}\in H_i(C_\I(f))$, we can consider
$\widetilde{\sigma}\times f\in H_{i+1}(C_\I(f)\times \langle
f\rangle)$ and define its image to be $\sigma\times f\in
H_{i+1}(\I_{g,\ast})$. Of particular importance is the case when $f$
is a twist $T_\gamma$ about a separating curve. However, unlike
bounding pair maps, separating twists do not produce nontrivial
abelian cycles with respect to $\tau_i$.

\begin{proposition}\label{prop:septwist}
Let $T_\gamma$ be the Dehn twist about a separating curve $\gamma$, and let\linebreak
$\sigma\in H_i(\I_{g,*})$ be such that 
$\sigma\times T_\gamma$ is well-defined.  Then $\tau_{i+1}(\sigma\times
  T_\gamma)=0$.
\end{proposition}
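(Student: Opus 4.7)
The approach will be to adapt the argument of Proposition~\ref{prop:crossTstable}, exploiting the crucial fact that a separating curve is nullhomologous---in stark contrast to the nonseparating curves of a bounding pair, whose common homology class $a$ was exactly what appeared in the formula $\tau_{i+1}(\sigma \times f) = \tau_i(\sigma) \wedge a$.

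Since $\sigma \times T_\gamma$ is defined, $\sigma$ is represented by some class $\widetilde\sigma \in H_i(C_\I(T_\gamma))$, which I would in turn represent by a bundle $S_{g,*}\to E \to B$ whose monodromy commutes with $T_\gamma$; the twist $T_\gamma$ then acts fiberwise on $E$ as a bundle automorphism. The bundle representing $\sigma \times T_\gamma$ is the fiberwise mapping torus
\[
\bar E \coloneq E \times [0,1]/\bigl((p,0)\sim (T_\gamma p, 1)\bigr),
\]
which fibers as $S_{g,*} \to \bar E \to B \times S^1$; the preimage of $[B] \times [S^1]$ is a class $[\bar E] \in H_{i+3}(\bar E)$, and $\tau_{i+1}(\sigma \times T_\gamma) = (J_{\bar E})_\ast [\bar E]$ for any parametrized Abel--Jacobi map $J_{\bar E}$.

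The key step is to exploit the separating assumption to pick $J_{\bar E}$ cleverly. Because $\gamma$ is nullhomologous, $j|_\gamma$ is nullhomotopic in $T^{2g}$, so after a homotopy of $j$ supported in a neighborhood $N$ of $\gamma$ we may assume $j \equiv 0$ on $N$; this does not affect the induced map on $\pi_1$, so $j$ remains an Abel--Jacobi map. Since $T_\gamma$ is supported in $N$, we obtain $j \circ T_\gamma = j$ pointwise. Using that the monodromy of $E$ preserves $\gamma$ up to isotopy and acts trivially on $H_1(S_g)$, I would extend this choice to a parametrized Abel--Jacobi map $J_E \colon E \to T^{2g}$ that vanishes on the annular subbundle $N_E \subset E$ with fiber $N$. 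Then $J_E \circ T_\gamma = J_E$ globally, so the formula $J_{\bar E}(p, t) \coloneq J_E(p)$ descends to a well-defined map on $\bar E$, easily checked to be a parametrized Abel--Jacobi map.

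Because $J_{\bar E}$ is constant in the $S^1$-direction of the mapping torus, its image coincides with that of $J_E$, which after a cellular approximation lies in an $(i+2)$-dimensional subcomplex $K \subset T^{2g}$. Since $H_{i+3}(K) = 0$, we conclude $(J_{\bar E})_\ast [\bar E] = 0$, yielding $\tau_{i+1}(\sigma \times T_\gamma) = 0$. The main technical point is the extension step---producing $J_E$ that vanishes on $N_E$ while remaining a parametrized Abel--Jacobi map---but this should go through routinely using that $C_\I(T_\gamma)$ preserves both $\gamma$ and the homology of $S_g$.
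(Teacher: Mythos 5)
Your proof is correct and follows essentially the same route as the paper: the paper also arranges for the Abel--Jacobi map to vanish along $\gamma$ (phrasing this as a fiberwise collapse of $\gamma$, giving a bundle $Y$ of wedges $S_h\vee S_{g-h}$ over $B$), so that the parametrized Abel--Jacobi map on $\bar E$ is constant in the circle direction and factors through an $(i+2)$--dimensional complex. One small polish: rather than invoking cellular approximation (which homotopes $J_E$ and could a priori disturb the $T_\gamma$--invariance you use to descend to $\bar E$), observe that your $J_E$ already factors through the $(i+2)$--dimensional complex obtained by collapsing the subbundle $N_E$, and $J_{\bar E}$ factors through that same complex, so $(J_{\bar E})_*[\bar E]$ lies in the image of $H_{i+3}$ of an $(i+2)$--dimensional complex and hence vanishes.
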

\begin{proof}
  Let $S_{g,*}\to E\to B$ represent $[B]\mapsto \sigma$. By assumption
  we may assume that the classifying map factors
  through $C_\I(f)$, so the entire image of $\pi_1(B)\to \I_{g,*}$
  fixes the curve $\gamma$. Thus by fiberwise collapsing $\gamma$ to
  a point, we have the quotient $E\to Y$, where $Y$ fibers
  as \[S_{h}\vee S_{g-h}\to Y\to B\] for some $1\leq h<g$. This is the
  union of two subspaces, $S_h\to Y_1\to B$ and \linebreak
  $S_{g-h}\to Y_2\to
  B$. Since $\gamma$ is separating, we may start with an Abel--Jacobi map
  $j\colon S_{g,*}\to T^{2g}$ so that $\gamma$ is mapped to 0, so we
  can find a parametrized Abel--Jacobi map $J_E\colon E\to T^{2g}$ which factors
  through $Y$.

  The class $\sigma\times T_\gamma$ is represented by $S_{g,*}\to
  \Ebar\to B\times S^1$, where as above \linebreak 
  $\Ebar=E\times
  [0,1]/(T_\gamma(p),0)\sim (p,1)$. As above, $\Ebar$ descends to a
  quotient \linebreak $S_{h}\vee S_{g-h}\to Z\to B\times S^1$. This is the union
  of two subspaces, which are easily seen to be products $Y_1\times
  S^1$ and $Y_2\times S^1$. We may define $J_{\Ebar}\colon Z\to
  T^{2g}$ on both $Y_1\times S^1$ and $Y_2\times S^1$ by $J_E\times
  0$. Thus $J_{\Ebar}$ factors through the $(i+2)$--dimensional complex
  $Y$, and so
  \[\tau_{i+1}(\sigma\times
  T_\gamma)=(J_{\Ebar})_*[\Ebar]=0.\qedhere\]
\end{proof}

\section{The Gysin homomorphism and $\tau_i$}\label{sec:gysin}
In this section we show how the Gysin homomorphism can be used to construct nonzero cycles detectable by $\tau_i$.   To this end, consider the universal surface bundle
\[1\to S_g \to \Tor_{g,*}\overset{\pi}{\to} \Tor_g\to 1.\] We then
have the Gysin homomorphism $\pi^!\colon H_i(\I_g)\to
H_{i+2}(\I_{g,*})$; by precomposing with the map $H_i(\I_{g,*})\to H_i(\I_g)$ induced by $\pi$, we can also consider $\pi^!$ as a map $H_i(\I_{g,*})\to
H_{i+2}(\I_{g,*})$. Composing with $\tau_{i+2}$ we obtain
\[\tau_{i+2}\circ \pi^!\colon H_i(\I_{g,\ast})\to \bwedge^{i+4} H.\]

We can use this map to detect new nontrivial cycles in $H_{i+2}(\I_{g,\ast})$.
Let $\{f_1,\ldots,f_k\}$ be a truly
  nested collection of bounding pair maps with homology classes
  $c_1,\ldots,c_k$. As before, consider the component of $S_g\setminus (\alpha_1\cup \beta_1)$ not containing the basepoint (the ``farthest'' subsurface), let $S_0$ be a maximal subsurface with one boundary component, and let $\omega_0$ represent the symplectic form on $H_1(S_0)$. Similarly, consider the component of $S_g\setminus (\alpha_k\cup \beta_k)$ containing the basepoint (the ``closest'' subsurface), let $S^0$ be a maximal subsurface with one boundary component, and let $\omega^0$ represent the symplectic form on $H_1(S^0)$. The following theorem holds regardless of the choice of $S_0$ and $S^0$.

\begin{theorem}\label{thm:evenshriek}
  Let $k\geq 2$ be even, and let $\{f_1,\ldots,f_k\}$ be a truly
  nested collection of bounding pair maps with homology classes
  $c_1,\ldots,c_k$. Then with $\omega_0$ and $\omega^0$ as above,
  \[\tau_{k+2}\big( \pi^!\{f_1,\ldots,f_k\}\big)=2\cdot \omega_0\wedge\omega^0
  \wedge c_1\wedge \cdots\wedge c_k.\]
\end{theorem}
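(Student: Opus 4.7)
The plan is to adapt the ``factor through a bundle of tori'' approach of Proposition~\ref{prop:tau1} and Theorem~\ref{thm:nested} to the Gysin-enlarged cycle. First I would represent the abelian cycle $\{f_1,\ldots,f_k\}\in H_k(\I_g)$ by the $S_g$--bundle $S_g\to E\to T^k$ obtained by pulling back the universal bundle; since each $f_i$ lies in $\I_{g,\ast}$, the bundle admits a section $s_0$ from the original basepoint, and the fundamental class $[E]\in H_{k+2}(\Tor_{g,\ast})$ represents $\pi^!\{f_1,\ldots,f_k\}$. To compute $\tau_{k+2}([E])$, I use the fiber product $\widetilde E\coloneq E\times_{T^k} E$ as the $S_{g,\ast}$--bundle over $E$, with the diagonal $\Delta\colon E\to\widetilde E$ serving as its marked-point section. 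Using $s_0$, I build the parametrized Abel--Jacobi map $\phi\colon E\to T^{2g}$ of Theorem~\ref{thm:nested} (for which $\phi_\ast[E]=\omega_0\wedge c_1\wedge\cdots\wedge c_k$), and define a parametrized Abel--Jacobi map for $\widetilde E\to E$ by the difference $J(x,y)\coloneq\phi(y)-\phi(x)$, which vanishes on the diagonal and restricts on each fiber to a genuine Abel--Jacobi map based at the diagonal point.

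The evenness hypothesis emerges cleanly from the swap involution $\sigma\colon (x,y)\mapsto(y,x)$ of $\widetilde E$: it preserves orientation because the fiber $S_g\times S_g$ is $4$--dimensional, so $\sigma_\ast[\widetilde E]=[\widetilde E]$; while $J\circ\sigma=-J$, and inversion acts on $H_{k+4}(T^{2g})$ by $(-1)^k$. Hence $J_\ast[\widetilde E]=(-1)^k\,J_\ast[\widetilde E]$, which forces $\tau_{k+2}(\pi^!\{f_1,\ldots,f_k\})=0$ for $k$ odd and (for $k$ even) is a consistency check that foreshadows the factor of~$2$ in the formula.

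To evaluate $J_\ast[\widetilde E]$ for even $k$, I then factor $\phi$ through a bundle $Y\to Z\to T^k$ where $Y=\bigcup T_i$ is the standard union of $2$--tori used in Proposition~\ref{prop:tau1}; then $J$ factors through $\widetilde Z\coloneq Z\times_{T^k} Z$, which is a union of bundles $Z_i\times_{T^k}Z_j$ with fiber $T_i\times T_j$, and $[\widetilde E]$ pushes forward to the sum of the $[Z_i\times_{T^k}Z_j]$. On each such piece, the multi-parameter generalization of the formula $\phi(p,t)=j(p)+t\cdot\delta(p)$ from Proposition~\ref{prop:tau1} (with one shift $\delta_\ell$ per bounding pair $f_\ell$, equal to $c_\ell$ on the tori ``inside'' $\alpha_\ell\cup\beta_\ell$ and zero elsewhere) presents $J$ as a group homomorphism from $T^{k+4}$ to a subtorus of $T^{2g}$ spanned by $T_i$, $T_j$, and the $c_\ell$; the induced pushforward on $H_{k+4}$ is then a determinant of signs.

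The hard part will be the case analysis on $(i,j)$. The outcome is that only pairs with one of $T_i,T_j$ in the innermost subsurface $S_0$ and the other in the outermost subsurface $S^0$ contribute; for every other pair the image of $J$ lies in a subtorus of $T^{2g}$ of dimension strictly less than $k+4$, and hence contributes zero to $H_{k+4}(T^{2g})$. The two surviving orderings $(i,j)$ and $(j,i)$ are interchanged by $\sigma$ and, since $k$ is even, contribute with equal sign; summing the inner indices $i<k_1$ against the outer indices $j>k_k$ over both orderings assembles into $2\,\omega_0\wedge\omega^0\wedge c_1\wedge\cdots\wedge c_k$. The new factor $\omega^0$ appears precisely because the Gysin pullback has freed the basepoint to sweep out the basepoint-containing region of~$S_g$.
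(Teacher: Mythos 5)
Your proposal is correct and follows essentially the same route as the paper: represent the abelian cycle by a bundle over $T^k$, form the fiber product with the diagonal section, take the difference of two copies of the parametrized Abel--Jacobi map, factor through the bundle of unions of $2$--tori, kill all pieces whose image lies in a subtorus of dimension less than $k+4$, and sum the two surviving (inner, outer) orderings, which agree in sign because $k$ is even. The swap-involution observation you include as a consistency check is exactly the paper's separate proof of the odd case (Theorem~\ref{thm:oddshriek}), and the paper likewise remarks that its sign computation recovers that result.
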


In contrast, when $k$ is odd, we have the following theorem.
\begin{theorem}\label{thm:oddshriek}
  If $k$ is odd, then $\tau_{k+2}\circ \pi^!$ is the zero map.
\end{theorem}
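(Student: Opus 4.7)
The plan is to represent $\pi^!\sigma$ by a canonical bundle with a swap symmetry and exploit that symmetry to force the class to be $(-1)^{k+2}$ times itself. First I would represent $\sigma\in H_k(\I_g)$ by a bundle $S_g\to E\to B$ with $[B]\mapsto \sigma$ (after passing to an integral multiple, so that $B$ is closed). Pulling back the universal bundle $\Tor^\ast_{g,\ast}\to \Tor_{g,\ast}$ along the classifying map $E\to \Tor_g$, one sees that $\pi^!\sigma \in H_{k+2}(\I_{g,\ast})$ is represented by the fiber product
\[
S_{g,\ast}\to \widetilde{E}\coloneq E\times_B E\to E,
\]
with bundle structure given by projection to the first factor and section given by the diagonal $\Delta\colon E\hookrightarrow E\times_B E$. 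Here the fundamental class $[\widetilde{E}]\in H_{k+4}(\widetilde{E})$ is the preimage of $[E]\in H_{k+2}(E)$.

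The key observation will be that $\widetilde{E}$ carries a swap involution $\iota\colon (e_1,e_2)\mapsto (e_2,e_1)$ that fixes the diagonal section pointwise and acts trivially on $[\widetilde{E}]$: on each fiber $S_g\times S_g$ of $\widetilde{E}\to B$ the swap is orientation-preserving, because $\dim S_g = 2$ is even. I would then argue that $J_{\widetilde{E}}\circ \iota$ is homotopic to $-J_{\widetilde{E}}$, where $-$ denotes post-composition with the negation $T^{2g}\to T^{2g}$, $x\mapsto -x$. Both maps are zero on the diagonal section (since $\iota$ fixes it), and both restrict on each fiber $S_g$ of $\widetilde{E}\to E$ to $-1$ times the abelianization on $\pi_1$. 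The relevant computation is that, up to homotopy, Abel--Jacobi maps based at two different points $p$ and $q$ differ by a translation, so the assignment $p\mapsto j_p(e_1)$ (appearing in $J_{\widetilde{E}}\circ \iota$ restricted to the fiber over $e_1$) equals a constant minus an Abel--Jacobi map, giving $-$(abelianization) on $\pi_1$. Since both maps satisfy the two properties characterizing a parametrized Abel--Jacobi map relative to the diagonal section (modified by sign), the characterization from \S\ref{section:examples} forces them to be homotopic.

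Combining these observations, and using that the negation of $T^{2g}$ acts on $H_{k+2}(T^{2g})\approx \bwedge^{k+2} H$ by $(-1)^{k+2}$, one obtains
\[
\tau_{k+2}(\pi^!\sigma) = (J_{\widetilde{E}})_\ast[\widetilde{E}] = (J_{\widetilde{E}})_\ast \iota_\ast [\widetilde{E}] = (J_{\widetilde{E}}\circ \iota)_\ast[\widetilde{E}] = (-1)^{k+2}\, (J_{\widetilde{E}})_\ast[\widetilde{E}].
\]
When $k$ is odd this forces $\tau_{k+2}(\pi^!\sigma)=0$ in rational homology. The main obstacle will be verifying the homotopy $J_{\widetilde{E}}\circ \iota \simeq -J_{\widetilde{E}}$; the subtlety is that $\iota$ is \emph{not} a bundle automorphism of $\widetilde{E}\to E$, so the $-$(abelianization) condition on fibers of $\widetilde{E}\to E$ must be checked by explicitly tracking how the classical Abel--Jacobi map varies with its basepoint, rather than by pulling back along a bundle map.
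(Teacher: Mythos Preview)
Your approach is essentially the paper's: represent $\pi^!\sigma$ by the fiber product $E\times_B E$, use the swap involution (which preserves the fundamental class since $\dim S_g$ is even), and combine this with the fact that negation on $T^{2g}$ acts by $(-1)^{\deg}$ on homology. The one substantive difference is that the paper dispatches your ``main obstacle'' in one line by \emph{choosing} the parametrized Abel--Jacobi map explicitly as
\[
J_{\widetilde{E}}\big((e_1,e_2)\big)=J_E(e_1)-J_E(e_2),
\]
where $J_E$ is a parametrized Abel--Jacobi map for $E\to B$; with this formula the identity $J_{\widetilde{E}}\circ\iota=\nu\circ J_{\widetilde{E}}$ holds on the nose, so no homotopy argument or basepoint-variation analysis is needed. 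Your abstract characterization argument is correct, but this explicit model makes it unnecessary.

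One small slip: $\tau_{k+2}(\pi^!\sigma)$ lives in $H_{k+4}(T^{2g})\approx\bwedge^{k+4}H$, not $H_{k+2}$, so the relevant sign is $(-1)^{k+4}$; since $k+4\equiv k\pmod 2$ this does not affect the conclusion.
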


Before proving these theorems, we interpret $\pi^!$ in terms of
bundles as above. The composition $\Tor_{g,*}^*\to \Tor_{g,*}\to
\Tor_g$ yields (as we will show in the following two paragraphs) a fiber bundle $S_g\times S_g\to
\Tor_{g,*}^*\overset{\Pi}{\to}\Tor_g$ with associated Gysin
homomorphism $\Pi^!\colon H_i(\I_g)\to H_{i+4}(\I_{g,*}^*)$. Recall
that $J\colon \I_{g,*}^*\to H_\Z$ is the homomorphism which is the
abelianization on the fiber and trivial on the subgroup $\I_{g,*}$. By
definition, we have \[\tau_{k+2}\circ \pi^!=J_*\circ \Pi^!\colon
H_k(\I_g)\to H_{k+4}(\I_{g,*}^*)\to H_{k+4}(H_\Z)\approx \bwedge^{k+4}
H.\] This can be described explicitly in terms of bundles, as follows.

Let $S_g\to E\to B$ represent $[B]\mapsto \sigma\in H_k(\I_g)$, with
$[E]\in H_{k+2}(E)$ denoting the preimage of $[B]$. Let $S_g\to \Ebar\to E$ be
the pullback of $S_g\to E\to B$ to $E$ by the map $p\colon E\to B$,
and let $[\Ebar]\in H_{k+4}(\Ebar)$ be the preimage of $[E]$. This
pullback consists of pairs of points $(e_1,e_2)\in E\times E$ such
that $p(e_1)=p(e_2)$. Thus the ``diagonal'' consisting of pairs
$(e,e)$ gives a section $s\colon E\to \Ebar$ of the bundle $S_g\to
\Ebar\to E$.  In summary, we have the following diagram:

\[\xymatrix@-10pt@!{
  S_g\times S_g\ar@{}[r]|-*!<0pt,-2pt>{\supset\Delta=}\ar[dr]
  &S_g\ar[d]&S_g\ar[d]\\
  &\Ebar\ar[r]\ar[d]\ar^{\Pi}[dr]&E\ar^{\pi}[d]\\&E\ar[r]&B}\]
  
By composing with the map $E\to B$, we can consider $\Ebar$ as a
bundle over $B$. The fiber $F$ is a bundle-with-section of the form $S_{g,*}\to F\to S_g$.
It can be verified that the monodromy $\pi_1(S_g,\ast)\to \Mod_{g,*}$ is contained in the kernel of the natural map $\Mod_{g,*}\to \Mod_g$ (it is easy to check that this kernel is contained in $\I_{g,*}$). Indeed Birman proved (see, e.g.\ \cite[Theorem 4.6]{FM}) that this map gives an isomorphism
\begin{equation}\label{eq:pointpushing}
\pi_1(S_g,\ast)\approx \ker(\Mod_{g,*}\to \Mod_g)
\end{equation} It thus follows that as a surface bundle, $F\approx S_g\times
S_g$. The section $s$ intersects each fiber $F$ in the diagonal
$\Delta\subset S_g\times S_g$.

Since $[\Ebar]=\Pi^!([B])$, we have that $\tau_{k+2}(\pi^!(\sigma))$
is the image of $[\Ebar]$ under the parametrized Abel--Jacobi map $J_{\Ebar}\colon
\Ebar\to T^{2g}$, which can be constructed as follows.  Let $J_E\colon
E\to T^{2g}$ be a parametrized Abel--Jacobi map, and define $J_{\Ebar}\colon
\Ebar\to T^{2g}$ to be
\[J_{\Ebar}\big((e_1,e_2)\big)=J_E(e_1)-J_E(e_2).\] As above, to
verify that $J_{\Ebar}$ is the parametrized Abel--Jacobi map for $\Ebar$, we need
to check that the induced map on fundamental group is trivial when
restricted to the section $s$, and is the abelianization when
restricted to a fiber $S_g$. The former is immediate, since $s$
consists of pairs $(e,e)$. The fiber $S_g$ is the set of pairs
$\{(e,e_0)\}$ in $\Ebar$, for some fixed $e_0\in E$. The map $e\mapsto
(e,e_0)$ identifies this with the fiber of $E$ containing $e_0$. Note
that since $J_E$ is a parametrized Abel--Jacobi map for $E$, its restriction to a
fiber induces the abelianization. Since
$J_{\Ebar}\big((e,e_0)\big)=J_E(e)-J_E(e_0)$, the restriction of
$J_{\Ebar}$ to this fiber $S_g$ is the translate of $J_E$ by the
constant $-J_E(e_0)$. Thus when restricted to this fiber, $J_{\Ebar}$
is homotopic to $J_E$ and thus induces the same map on the fundamental group.

With this description in terms of bundles in hand, we can now prove
the theorems stated above.

\begin{proof}[Proof of Theorem~\ref{thm:oddshriek}]
  The bundle $S_g\times S_g\to \Ebar\to B$ admits a natural involution
  $\rho\colon \Ebar\to \Ebar$ defined by
  $\rho\big((e_1,e_2)\big)=(e_2,e_1)$. Note that $\rho$ covers the
  identity $B\to B$. Restricted to a fiber, this is just the
  transposition of coordinates $S_g\times S_g\to S_g\times S_g$.
  Since $S_g$ is even-dimensional, this homeomorphism is
  orientation-preserving, and so it fixes the fundamental class
  $[S_g\times S_g]\in H_4(S_g\times S_g)$. Thus by the naturality of
  the Gysin homomorphism (see e.g.\ \cite[Proposition 4.8(iii)]{Mo3})
  we have $\rho_*\circ \Pi^!=\Pi^!$.  Define $\nu\colon T^{2g}\to
  T^{2g}$ to be the map induced by the map $\R^{2g}\to\R^{2g}$ given by $v\mapsto -v$. Note that $\nu_*\colon H_k(T^{2g})\to
  H_k(T^{2g})$ is the identity when $k$ is even, and is minus the
  identity when $k$ is odd. From the way we constructed $J_{\Ebar}$,
  we see that
  \[J_{\Ebar}\circ \rho=\nu\circ J_{\Ebar}.\]
  But now we have 
  \[(J_{\Ebar})_*\circ \Pi^!=(J_{\Ebar})_*\circ \rho_*\circ
  \Pi^!=\nu_*\circ (J_{\Ebar})_*\circ \Pi^!\] Thus when $k$ is odd, we
  have $(J_{\Ebar})_*\circ \Pi^!=-(J_{\Ebar})_*\circ \Pi^!$, which
  implies \[\tau_{k+2}\circ \pi^!=(J_{\Ebar})_*\circ \Pi^!=0\] as
  desired.
\end{proof}

\begin{proof}[Proof of Theorem~\ref{thm:evenshriek}]
  Let $S_g\to E\to T^k$ be the bundle classifying the abelian cycle
  $\sigma=\{f_1,\ldots,f_k\}\in H_k(\I_g)$. Form as above the fiber
  product bundle $S_g\to \Ebar\to E$ representing $\pi^!(\sigma)$, and
  view it as a bundle $S_g\times S_g\to \Ebar\to T^k$. The
  parametrized Abel--Jacobi map $J_{\Ebar}\colon \Ebar\to T^{2g}$ can
  be defined as in the proof of Proposition~\ref{prop:tau1}. We
  construct $\Ebar$ as
  \[S_g\times S_g\times [0,1]^k/\sim\] where
  \[(p,q,t_1,\ldots,t_{i-1},0,t_{i+1},\ldots,t_k)\sim
  (f_i(p),f_i(q),t_1,\ldots,t_{i-1},1,t_{i+1},\ldots,t_k).\] Let
  $j\colon S_g\to T^{2g}$ be the Abel--Jacobi map, and let
  $\delta_i\colon S_g\to \R^{2g}$ be the unique map satisfying $j\circ
  f_i=j+\delta_i\bmod{\Z^{2g}}$ and $\delta_i(\ast)=0$. Then $J_{\Ebar}$
  can now be defined by
  \[J_{\Ebar}\big((p,q,t_1,\ldots,t_k)\big)=j(p)-j(q)+\sum_i
  t_i\big(\delta_i(p)-\delta_i(q)\big).\] The definition of $\delta_i$
  was chosen exactly so that this descends to a map $J_{\Ebar}\colon
  \Ebar\to T^{2g}$.

  \begin{figure}[h]
    \label{fig:shriek}
    \centering
      \includegraphics[width=150mm]{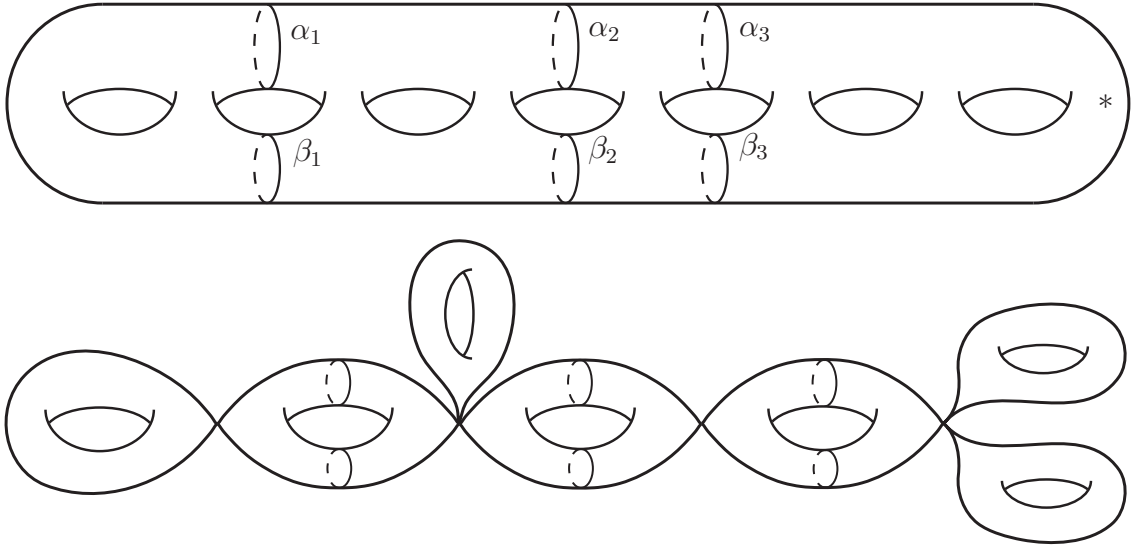}
    \caption{\textbf{a.} The bounding pair maps $f_i$. \textbf{b.} The
      quotient $Y=\bigcup T_\ell$.}
  \end{figure}
  Any truly nested collection of bounding pairs is, up to homeomorphism, of the form
  depicted in Figure~\figshriek{}a.  The maps $f_i$ factors through
  a union of 2--dimensional tori $Y=\bigcup T_\ell$, any two of which
  meet in at most one point, as depicted in Figure~\figshriek{}b. We
  have a basis $\{a_1,b_1,\ldots,a_g,b_g\}$ for $H$ so that $a_\ell$
  and $b_\ell$ span the homology of the torus $T_\ell$.  For each
  bounding pair map $f_i$, the homology class $c_i$ of its defining
  pair of curves is equal to $a_{\ell_i}$ for some $\ell_i$. As in the
  definition of a truly nested collection, we assume that
  $\ell_i<\ell_{i'}$ if $i<i'$; for simplicity, we order the $T_\ell$
  so that $T_\ell$ is separated from the basepoint by $T_{\ell_i}$ iff
  $\ell<\ell_i$.
  
  We can choose $j$ so that $j$, and thus also the
  $\delta_i$, factors through $Y$, and furthermore so that as in
  Proposition~\ref{prop:tau1}, $j$ and $j\circ f_i$ differ only in the
  component corresponding to $a_{\ell_i}$. The restriction of $j$ to
  the torus $T_\ell$ gives an identification with the linear subspace
  of $T^{2g}$ consisting of the $\langle a_\ell,b_\ell\rangle$ plane;
  parametrizing $T_\ell$ by this identification, we have that the
  restriction of $j$ to $T_\ell$ is just the inclusion of this
  subspace.

  It follows that $J_{\Ebar}$ factors through a space $Z$ which fibers 
  as a bundle\linebreak $Y\times Y\to Z\to T^k$; call the resulting map
  $J_Z\colon Z\to T^{2g}$. This bundle is the union of subspaces of
  the form $T_\ell\times T_{\ell'}\to Z_{\ell,\ell'}\to T^k$; the
  intersection of two such subspaces has codimension at least 2,
  corresponding to $T_{\ell_1}\times T_{\ell'}\cap T_{\ell_2}\times
  T_{\ell'}=\ast\times T_{\ell'}$ or to $T_{\ell_1}\times T_{\ell_1'}\cap
  T_{\ell_2}\times T_{\ell_2'}=\ast\times \ast$. It follows that the
  fundamental class $[\Ebar]\in H_{k+4}(\Ebar)$ projects to the sum of
  the fundamental classes $\sum[Z_{\ell,\ell'}]\in H_{k+4}(Z)$. Thus
  to compute $\tau_{k+2}(\pi^!\{f_1,\ldots,f_k\})
  =(J_{\Ebar})_*[\Ebar]$, it remains to understand
  $(J_Z)_*[Z_{\ell,\ell'}]$.

  Call the subspace $Z_{\ell,\ell'}$ \emph{bad} if either $\ell$ or
  $\ell'$ is equal to $\ell_i$ for some $i$; otherwise call
  $Z_{\ell,\ell'}$ \emph{good}. First, let us check that for bad
  $Z_{\ell,\ell'}$, we have $(J_Z)_*[Z_{\ell,\ell'}]=0$. For
  $(p,q,t_1,\ldots,t_k)\in Z_{\ell,\ell'}$ we have $p\in T_\ell$ and
  $q\in T_{\ell'}$; thus $j(p)$ and $j(q)$ are contained in the subspace
  determined by $\langle a_\ell, b_\ell\rangle$ and $\langle
  a_{\ell'},b_{\ell'}\rangle$ respectively. Recall that each $\delta_i$ is
  nonzero only in the coordinate corresponding to $c_i$. Our formula
  for $J_{\Ebar}((p,q,t_1,\ldots,t_k))$ thus implies that
  $J_Z(Z_{\ell,\ell'})$ is contained in the subspace determined by the
  collection $\langle a_\ell, b_\ell,
  a_{\ell'},b_{\ell'},c_1,\ldots,c_k\rangle$. However, the assumption
  that $Z_{\ell,\ell'}$ is bad implies that $a_\ell$ or $a_{\ell'}$
  coincides with some $c_i$. Thus this subspace has dimension at most
  $k+3$, and so $(J_Z)_*[Z_{\ell,\ell'}]\in H_{k+4}(T^{2g})$ must be
  zero.

  Now we consider the good pieces $Z_{\ell,\ell'}$. Since neither
  $\ell$ nor $\ell'$ is of the form $\ell_i$ for any $i$, we have that
  each map $f_i$ is the identity on $T_\ell$ and $T_{\ell'}$. It
  follows that the bundle $T_\ell\times T_{\ell'}\to Z_{\ell,\ell'}\to
  T^k$ is actually a product $Z_{\ell,\ell'}\approx T_\ell\times
  T_{\ell'}\times T^k$. Note that since $j\circ f_i=j$ on $T_{\ell}$,
  we have that each $\delta_i$ is constant on $T_{\ell}$ and is
  nonzero only in the component corresponding to $c_i$. In that
  component, we have as before that $\delta_i$ is either $1$ or $0$ on
  $T_{\ell}$, depending on whether the torus $T_{\ell}$ is cut off
  from the basepoint by $\alpha_i\cup \beta_i$ or not. Denoting
  this number by $n^i_\ell\in \{0,1\}$, we see that $n^i_\ell$ is $1$
  if $\ell<\ell_i$ and is $0$ if $\ell_i<\ell$.

  The restriction of $J_Z$ to $Z_{\ell,\ell'}\approx T_\ell\times
  T_{\ell'}\times T^k$ may now be read off from the formula for
  $J_{\Ebar}$ above. The restriction is a linear map, which can be
  described on each factor. On the first and second factors, it is the
  inclusion of $T_\ell$ as the torus determined by $\langle
  a_\ell,b_\ell\rangle$ and the inclusion of $T_{\ell'}$ as the torus
  determined by $\langle a_{\ell'},b_{\ell'}\rangle$ respectively. Let
  $\epsilon_i\in\{-1,0,1\}$ be the number $n^i_\ell-n^i_{\ell'}$. On
  the third factor $T^k$, $J_Z$ is the composition of the map $T^k\to
  T^k$ given by \begin{equation}\label{eq:flip}(t_1,\ldots,t_k)\mapsto
    (\epsilon_1 t_1,\ldots,\epsilon_k t_k)\end{equation} with the
  inclusion of $T^k$ as the torus determined by $\langle
  c_1,c_2,\ldots,c_k\rangle$. Note that the map $H_k(T^k)\to H_k(T^k)$
  induced by \eqref{eq:flip} is multiplication by
  $\epsilon_{\ell,\ell'}=\epsilon_1\cdots\epsilon_k$. From this
  description we see that the image of the fundamental class
  $[Z_{\ell,\ell'}]$ under $J_Z$ is
  \[(J_Z)_*[Z_{\ell,\ell'}]= \epsilon_{\ell,\ell'}\cdot a_\ell\wedge
  b_\ell\wedge a_{\ell'}\wedge b_{\ell'} \wedge c_1\wedge \cdots\wedge
  c_k.\] It thus remains only to understand
  $\epsilon_{\ell,\ell'}$. Since $n^i_\ell$ is $1$ if $\ell<\ell_i$
  and $0$ otherwise, we have that $\epsilon_i$ is 1 if
  $\ell<\ell_i<\ell'$, $-1$ if $\ell'<\ell_i<\ell$, and 0
  otherwise. Thus $\epsilon_{\ell,\ell'}$ is nonzero only if we have
  $\ell<\ell_1<\cdots<\ell_k<\ell'$ or
  $\ell'<\ell_1<\cdots<\ell_k<\ell$. In the former case, each
  $\epsilon_i=1$, so $\epsilon_{\ell,\ell'}=1$; in the latter, each
  $\epsilon_i=-1$, but since $k$ is even we have
  $\epsilon_{\ell,\ell'}=\epsilon_1\cdots\epsilon_k=1$ again. Note
  that if $k$ were odd, these terms would instead cancel, yielding
  another proof of Theorem~\ref{thm:oddshriek} (for the case of abelian
  cycles of bounding pairs).  Combining these cases, we conclude that
  \begin{align*}
    J_{\Ebar}\colon [\Ebar]&\mapsto \sum_{\ell,\ell'}[Z_{\ell,\ell'}]\\
    &\mapsto\sum_{\ell<\ell_1<\cdots<\ell_k<\ell'} 2\cdot a_\ell\wedge
    b_\ell\wedge a_{\ell'}\wedge b_{\ell'} \wedge c_1\wedge
    \cdots\wedge c_k\\
    &= 2\cdot \omega_0\wedge \omega^0\wedge c_1\wedge \cdots\wedge c_k
  \end{align*} and thus, as desired, \[\tau_{k+2}\big(
  \pi^!\{f_1,\ldots,f_k\}\big)=2\cdot \omega_0\wedge\omega^0 \wedge
  c_1\wedge \cdots\wedge c_k.\qedhere\]
\end{proof}

This proof is valid for $k=0$ as well, except for the last equality
above; in this case we instead have just \[\sum_{\ell<\ell'}2\cdot
a_\ell\wedge b_\ell\wedge a_{\ell'}\wedge b_{\ell'}=\omega\wedge
\omega.\] Thus if $[S_g]\in H_2(\I_{g,*})$ represents the
point-pushing subgroup as in \eqref{eq:pointpushing}, we deduce the following, which is necessary
for Corollary~\ref{cor:tau2}.
\begin{corollary} $\tau_2([S_g])=\omega\wedge \omega\in \bwedge^4 H$.
\end{corollary}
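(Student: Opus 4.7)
The plan is to run the proof of Theorem~\ref{thm:evenshriek} verbatim in the degenerate case $k=0$, where there are no bounding pair maps and the classifying torus $T^k$ collapses to a point. The first thing to verify is that $\pi^!(1) \in H_2(\I_{g,\ast})$ (where $1 \in H_0(\I_g)$ is the generator) really is the class $[S_g]$ representing the point-pushing subgroup \eqref{eq:pointpushing}. This is essentially the definition of the Gysin homomorphism: the universal bundle $S_g \to \Tor_{g,\ast} \to \Tor_g$ restricted over a point is a single fiber $S_g$, whose fundamental class in $H_2(\Tor_{g,\ast})$ is exactly the image of the point-pushing map $\pi_1(S_g) \hookrightarrow \I_{g,\ast}$ applied to $[S_g]\in H_2(\pi_1(S_g))$.

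Next I would set up the bundle picture. Since $k=0$ the base $T^k$ is a point, and the fiber-product bundle $\Ebar$ of the preceding discussion becomes simply $S_g \times S_g$, with parametrized Abel--Jacobi map $J_{\Ebar}(p,q) = j(p) - j(q)$, where $j\colon S_g \to T^{2g}$ is a fixed Abel--Jacobi map. Then $\tau_2([S_g]) = (J_{\Ebar})_\ast[S_g\times S_g]$. As in the proof of Theorem~\ref{thm:evenshriek}, factor $j$ through the union-of-tori $Y = \bigcup T_\ell$ obtained by collapsing a graph in $S_g$ as in Proposition~\ref{prop:tau0}, so that $J_{\Ebar}$ factors through $Y\times Y = \bigcup_{\ell,\ell'} T_\ell \times T_{\ell'}$. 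The fundamental class $[S_g \times S_g]$ pushes forward to $\sum_{\ell,\ell'} [T_\ell \times T_{\ell'}]$.

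Now I compute each piece. For $\ell = \ell'$ the image of $J_{\Ebar}$ restricted to $T_\ell \times T_\ell$ lies in the two-dimensional subtorus $\langle a_\ell, b_\ell\rangle \subset T^{2g}$, so its $4$--dimensional pushforward vanishes. For $\ell \neq \ell'$ the map $(p,q) \mapsto j(p) - j(q)$ on $T_\ell \times T_{\ell'}$ is the degree-one inclusion into the 4-dimensional torus determined by $\langle a_\ell, b_\ell, a_{\ell'}, b_{\ell'}\rangle$ (the two minus signs contribute a factor of $(-1)^2 = 1$), so the pushforward is $a_\ell\wedge b_\ell\wedge a_{\ell'}\wedge b_{\ell'}$.

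Summing and using the evenness argument from Theorem~\ref{thm:evenshriek} (swapping $\ell$ and $\ell'$ preserves $a_\ell \wedge b_\ell \wedge a_{\ell'}\wedge b_{\ell'}$ since each factor has even degree) gives
\[
\tau_2([S_g]) = \sum_{\ell\neq \ell'} a_\ell\wedge b_\ell\wedge a_{\ell'}\wedge b_{\ell'} = 2\sum_{\ell<\ell'} a_\ell\wedge b_\ell\wedge a_{\ell'}\wedge b_{\ell'}.
\]
To finish, I observe that $\omega\wedge\omega = \sum_{\ell,\ell'} a_\ell \wedge b_\ell \wedge a_{\ell'}\wedge b_{\ell'}$, where the diagonal terms $\ell=\ell'$ vanish because $a_\ell\wedge b_\ell\wedge a_\ell\wedge b_\ell = 0$, so $\omega\wedge\omega$ equals exactly the right-hand side above. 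No real obstacle arises; the only subtle point is correctly identifying $\pi^!(1)$ with the point-pushing class $[S_g]$, and tracking the combinatorics of the diagonal vanishing so that the answer comes out to $\omega\wedge\omega$ rather than some scalar multiple.
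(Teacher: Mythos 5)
Your proposal is correct and takes essentially the same route as the paper, which deduces the corollary by noting that the proof of Theorem~\ref{thm:evenshriek} remains valid for $k=0$ with the final sum becoming $\sum_{\ell<\ell'}2\cdot a_\ell\wedge b_\ell\wedge a_{\ell'}\wedge b_{\ell'}=\omega\wedge\omega$. You merely spell out the degenerate case (the identification $\pi^!(1)=[S_g]$, the vanishing of the diagonal pieces $T_\ell\times T_\ell$, and the sign bookkeeping) in more detail than the paper does.
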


\section{The image and kernel of $\tau_i$}
\label{sec:imageandkernel}

In this section we complete the proofs of
Theorem~\ref{theorem:surjective} and
Theorem~\ref{theorem:noninjective}.

\subsection{Proof of Theorem~\ref{theorem:surjective}}

  \begin{figure}[h]
    \label{fig:abeliansurj}
    \centering
      \includegraphics[width=140mm]{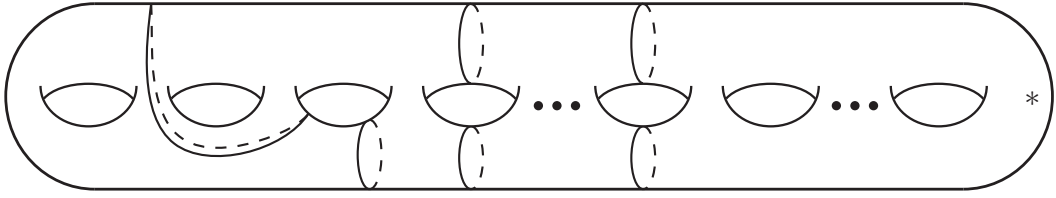}
    \caption{The collection of bounding pairs generating
      $V(\lambda_i)\oplus V(\lambda_{i+2})$.}
  \end{figure}  
  First, for all $i\leq g-2$, we show that $\tau_i(H_i(\I_{g,*}))$
  contains $V(\lambda_i)\oplus V(\lambda_{i+2})$. Consider the
  collection of bounding pairs displayed in
  Figure~\figabeliansurj. Let $\sigma\in H_i(\I_{g,*})$ be the
  associated abelian cycle; by Theorem~\ref{thm:nested} we have
  \[\tau_i(\sigma)=a_1\wedge b_1\wedge a_3\wedge \cdots
  \wedge a_{i+2}.\] Recall that there is an $\Sp$--equivariant contraction
  $C_k\colon \bwedge^kH\to \bwedge^{k-2}H$, which was defined in \eqref{eq:contraction}. We claim that
  $\tau_i(\sigma)\in\bwedge^{i+2}H$ generates $\ker C_i\circ
  C_{i+2}$ as a module. As previously noted, $\ker C_k\approx V(\lambda_k)$ and thus
  $\ker C_i\circ C_{i+2}\approx V(\lambda_i)\oplus V(\lambda_{i+2})$,
  so this will verify this case of the theorem.

  First note that $\tau_i(\sigma)$ is contained in $\ker
  C_i\circ C_{i+2}$; indeed, we have $C_{i+2}(\tau_i(\sigma))=
  a_3\wedge \cdots\wedge a_{i+2}$, which lies in $\ker C_i$. In
  particular, we see that $\tau_i(\sigma)$ is not contained in $\ker
  C_{i+2}\approx V(\lambda_{i+2})$. The element
  \[\nu\coloneq a_1\wedge (b_1+a_2)\wedge a_3\wedge \cdots \wedge a_{i+2}\]
  is clearly in the $\Sp$--orbit of $\tau_i(\sigma)$. Thus
  $\nu-\tau_i(\sigma)=a_1\wedge a_2\wedge a_3\wedge \cdots\wedge
  a_{i+2}$, which lies in $\ker C_{i+2}\approx V(\lambda_{i+2})$, is
  in the image of $\tau_i$. We conclude that the $\Sp$--span of
  $\tau_i(\sigma)$ is contained in $V(\lambda_i)\oplus
  V(\lambda_{i+2})$ and properly contains $V(\lambda_{i+2})$, and thus
  since the $V(\lambda_k)$ are irreducible, the $\Sp$--span of $\tau_i(\sigma)$
  is $V(\lambda_i)\oplus V(\lambda_{i+2})$.

  Note that for $i=g-1$, a similar collection of $g-1$ bounding pairs
  determines an abelian cycle $\sigma$ so that
  $\tau_{g-1}(\sigma)=a_1\wedge b_1\wedge a_2\wedge \cdots\wedge
  a_g\in \bwedge^{g+1}H$. The contraction
  $C_{g+1}\colon \bwedge^{g+1}H\to \bwedge^{g-1}H$ is injective \cite[Theorem 17.11]{FH}. The
  image $C_{g+1}(\tau_{g-1}(\sigma))=a_2\wedge\cdots \wedge a_g$
  clearly generates $V(\lambda_{g-1})$, and so
  $\tau_{g-1}(H_{g-1}(\I_{g,*}))\supseteq V(\lambda_{g-1})$.

  \begin{figure}[h]
    \label{fig:shrieksurj}
    \centering
      \includegraphics[width=145mm]{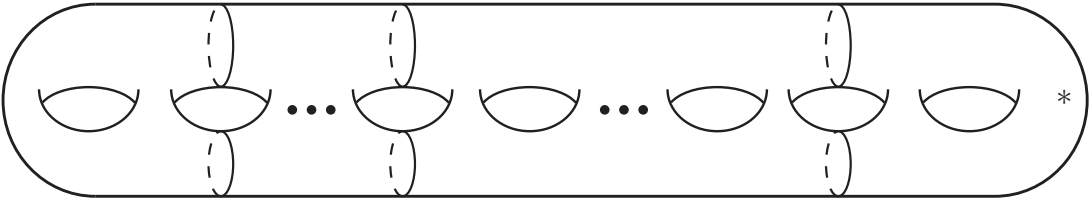}
    \caption{The $i-2$ bounding pairs used to generate
      $V(\lambda_{i-2})$.}
  \end{figure}
  We now show that when $1\leq i\leq g$ and $i$ is even,
  $\tau_i(H_i(\I_{g,*}))$ also contains $V(\lambda_{i-2})$. Take the
  collection of bounding pairs displayed in
  Figure~\figshrieksurj, and let $\sigma\in H_{i-2}(\I_{g,*})$
  be the associated abelian cycle; we will consider $\pi^!\sigma\in
  H_i(\I_{g,*})$. By
  Theorem~\ref{thm:evenshriek}, \[\tau_i(\pi^!\sigma)=2\cdot a_1\wedge
  b_1\wedge a_g\wedge b_g\wedge a_2\wedge \cdots\wedge a_{i-2}\wedge
  a_{g-1}.\] We claim that this element lies in $\ker C_{i-2}\circ
  C_i\circ C_{i+2}$, but not $\ker C_i\circ C_{i+2}$, and thus its
  $\Sp$--span contains $V(\lambda_{i-2})$ as desired. To see
  this, note that \[C_{i+2}(\tau_i(\pi^!\sigma))=2\cdot(a_1\wedge
  b_1+a_g\wedge b_g)\wedge a_2\wedge\cdots\wedge a_{i-2}\wedge
  a_{g-1},\] so \[C_i\circ C_{i+2}(\tau_i(\pi^!\sigma))=4\cdot
  a_2\wedge\cdots\wedge a_{i-2}\wedge a_{g-1},\] which lies in $\ker
  C_{i-2}$ as claimed.

\subsection{Proof of Theorem~\ref{theorem:noninjective}}

  \begin{figure}[h]
    \label{fig:ring}
    \centering
      \includegraphics[width=60mm]{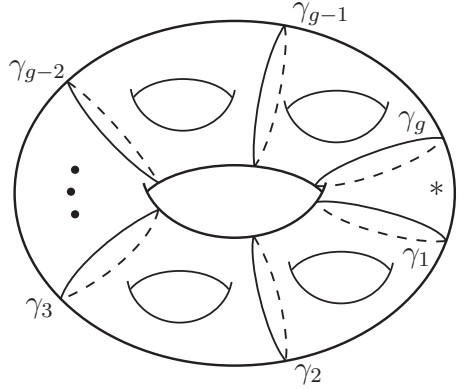}
    \caption{Nonseparating curves; each pair of adjacent curves yields
      a bounding pair $f_k$.}
  \end{figure}
  
  Consider the curves $\gamma_1,\ldots,\gamma_g$ displayed in
  Figure~\figring. For $1\leq k<g$, let  
  $f_k=T_{\gamma_{k+1}}T_{\gamma_k}^{-1}$, and for $2\leq i<g$, let
  $\sigma\in H_i(\I_{g,*})$ be the abelian cycle $\sigma\coloneq
  \{f_1,\ldots,f_i\}$. By Theorem~\ref{thm:non-nested},
  $\tau_i(\sigma)=0$ since the bounding pairs $f_k$ are not truly nested. We
  will show that $\sigma$ is nontrivial in $H_i(\I_{g,*})$, proving
  the theorem.

  Recall Johnson's map $\tauJ\colon \I_{g,*}\to \bwedge^3 H_\Z$,
  which induces \[(\tauJ)_*\colon H_i(\I_{g,*})\to H_i(\bwedge^3
  H_\Z)\approx \bwedge^i(\bwedge^3 H).\] Let $\iota\colon \Z^i\to
  \I_{g,*}$ be the inclusion of the subgroup $\langle
  f_1,\ldots,f_i\rangle$. By definition,
  \[(\tauJ)_*\sigma=(\tauJ\circ \iota)_*[\Z^i].\] From the
  identification of $H_i(\bwedge^3 H_\Z)$ with $\bwedge^i(\bwedge^3
  H)$, we have that \[(\tauJ\circ \iota)_*[\Z^i]=(\tauJ)_*[f_1]\wedge
  \cdots \wedge (\tauJ)_*[f_i].\] Let $\{a_1,b_1,\ldots,a_g,b_g\}$ be a symplectic basis for $H_1(S_g)$ so that $[\gamma_k]=a_g$ for all ${1\leq k < g}$, and so that $\{a_k,b_k\}$ gives a basis for the homology of the subsurface cut off by $\gamma_k$ and $\gamma_{k+1}$ for each $1\leq k< g$. By Johnson's computation of $\tau_J$
  (see \eqref{eq:Johnson} in Section~\ref{section:examples}
  above), $(\tauJ)_*[f_k]=a_k\wedge b_k\wedge a_g$, and thus
  \[(\tauJ)_*\sigma=\big(a_1\wedge b_1\wedge a_g\big)\wedge \cdots
  \wedge \big(a_i\wedge b_i\wedge a_g\big).\] This element is nonzero
  in $\bwedge^i(\bwedge^3 H)$ since $\{a_k\wedge b_k\wedge
  a_g\}_{k=1}^i$ is linearly independent in $\bwedge^3 H$. Thus $\tau_J(\sigma)\neq 0$ and so $\sigma\neq 0$, completing the proof of Theorem~\ref{theorem:noninjective}.

  \begin{figure}[h]
    \label{fig:detect}
    \centering
      \includegraphics[width=150mm]{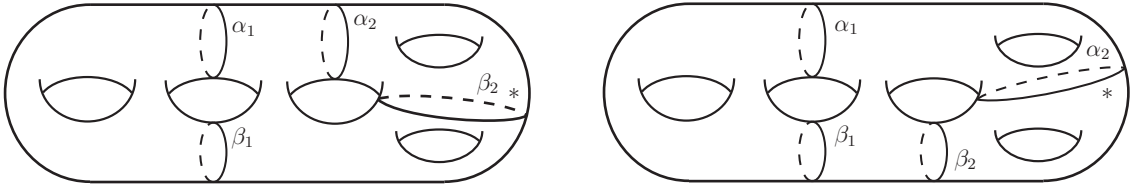}
    \caption{Homologically distinct abelian cycles not distinguishable
      by $\tau_i$.\newline \textbf{a.} The collection
      $\{f_1,f_2\}$. \textbf{b.} The collection $\{g_1,g_2\}$.}
  \end{figure}

  \begin{remark} We now give another example showing the non-injectivity of $\tau_i$. One
       notable feature of this example is that we replace $\tauJ$ in the proof above by the maps $\tau_i$
    themselves. For even $i\leq g-2$, let $\{f_k\}$ and $\{g_k\}$ be
    two truly nested collections of bounding pairs as in
    Figure~\figdetect, so that $f_k$ and $g_k$ are homologous; the
    collections cut off the same farthest subsurface; but the closest
    subsurfaces cut off by $\{f_k\}$ and $\{g_k\}$ determine different
    symplectic forms in $\bigwedge^2 H_1(S_g)$. By
    Theorem~\ref{thm:nested},
    $\tau_i(\{f_1,\ldots,f_i\})=\tau_i(\{g_1,\ldots,g_i\})$. However,
    Theorem~\ref{thm:evenshriek} shows that
    $\pi^!(\{f_1,\ldots,f_i\})$ is not equal to
    $\pi^!(\{g_1,\ldots,g_i\})$, and thus $\{f_1,\ldots,f_i\}$ is not
    equal to $\{g_1,\ldots,g_i\}$ in $H_i(\I_{g,*})$. Finally, note
    that we may choose $\{f_i\}$ and $\{g_i\}$ so that
    $(\tauJ)_*\{f_1,\ldots,f_i\}=(\tauJ)_*\{g_1,\ldots,g_i\}$, so this
    method yields new elements of $\ker \tau_i$ which cannot be
    detected by $(\tau_J)_*$.
\end{remark}

\subsection{Detecting homology using $\tau_J$}\label{sec:HS}
In general, computing the image of $(\tau_J)_*$ is very difficult; in
particular, by work of Kawazumi--Morita (see \cite[\S6.4]{Mo1}), a complete solution would
resolve the long-standing question of whether the even
Morita--Mumford--Miller classes $e_{2i} \in H^{4i}(\I_{g,*})$ are
nontrivial. However, in the lowest dimensions, the images have been
found explicitly for the related case of closed surfaces.
Considering the map
\[(\tau_J)_*\colon H_2(\I_g)\to \bwedge^2\big(\bwedge^3 H / H\big),\]
Hain \cite{Ha1} found that for $g\geq 6$, the image of $(\tau_J)_*$ is isomorphic
to
\[V(\lambda_6)\oplus V(\lambda_4)\oplus V(\lambda_2)\oplus
V(\lambda_2+\lambda_4).\] Similarly, Sakasai \cite{Sa} found that, up to
possibly a factor of $V(\lambda_1) = H$, the image of
\[(\tau_J)_*\colon H_3(\I_g)\to \bwedge^3(\bwedge^3 H / H)\]
for $g\geq 9$ is isomorphic to
\begin{align*}
&V(\lambda_5+2\lambda_2)\oplus
V(2\lambda_4+\lambda_1)\oplus
V(\lambda_6+\lambda_3)\oplus
V(\lambda_4+\lambda_3)\\\oplus
&V(\lambda_7+\lambda_2)\oplus
V(\lambda_5+\lambda_2)\oplus
V(\lambda_3+\lambda_2)\oplus
V(\lambda_6+\lambda_1)\oplus
V(\lambda_4+\lambda_1)\\\oplus
&V(\lambda_9)\oplus
V(\lambda_7)\oplus
V(\lambda_5)\oplus
V(\lambda_5)\oplus
V(\lambda_3).
\end{align*}

The stability of these decompositions is exactly the behavior
predicted by Conjecture~\ref{conjecture:repstab1}. Hain and Sakasai
also compute the decompositions for smaller $g$, but they do not
stabilize until $g\geq 6$ and $g\geq 9$ respectively.

\small

\noindent
Department of Mathematics\\ University of Chicago\\
5734 University Ave.\\
Chicago, IL 60637\\
E-mail: tchurch@math.uchicago.edu, farb@math.uchicago.edu

\end{document}